\DeclareMathAlphabet{\pazocal}{OMS}{zplm}{m}{n}
\newcommand{\bK}{\mathbb{K}}
\newcommand{\bN}{\mathbb{N}}
\newcommand{\N}{\mathbb{N}}
\newcommand{\Dim}{{\rm dim}}
\newcommand{\Ker}{{\rm ker}}
\newcommand{\spa}{{\rm span}}
\newcommand{\rk}{{\rm rk}}
\newcommand{\erk}{{\rm erk}}
\newcommand{\ann}{{\rm ann}}
\newcommand{\Supp}{{\rm supp}}
\newcommand{\diag}{{\rm diag}}
\newcommand{\alg}{{\rm alg}}
\newtheorem{lemma}{Lemma}[section]
\newtheorem{corollary}[lemma]{Corollary}
\newtheorem{theorem}[lemma]{Theorem}
\newtheorem{proposition}[lemma]{Proposition}
\newtheorem{remark}[lemma]{Remark}
\newtheorem{definition}[lemma]{Definition}
\newtheorem{definitions}[lemma]{Definitions}
\newtheorem{example}[lemma]{Example}
\newtheorem{examples}[lemma]{Examples}
\newtheorem{notation}[lemma]{Notation}
\definecolor{turquoise2}{rgb}{0,0.898039,0.933333}
\definecolor{magenta}{rgb}{1,0,1}
\definecolor{olivedrab}{rgb}{0.419608,0.556863,0.137255}
\definecolor{purple2}{rgb}{0.568627,0.172549,0.933333}
\definecolor{amethyst}{rgb}{0.6, 0.4, 0.8}
\definecolor{ao(english)}{rgb}{0.0, 0.5, 0.0}
\definecolor{atomictangerine}{rgb}{1.0, 0.6, 0.4}
\definecolor{amber(sae/ece)}{rgb}{1.0, 0.49, 0.0}
\definecolor{alizarin}{rgb}{0.82, 0.1, 0.26}
\definecolor{auburn}{rgb}{0.43, 0.21, 0.1}
\definecolor{aqua}{rgb}{0.0, 1.0, 1.0}
\begin{document}
%
%***********************************************************************************************
%***********************************************************************************************

\subjclass[2010] {17D92, 17A60, 17A36, 15A99} \keywords{evolution algebra, natural family, decomposition, property (2LI), natural basis, evolution ideal, evolution subalgebra, extending property}

\title[Natural families in evolution algebras]{Natural families in evolution algebras}

%\title[On the structure of evolution algebras]{On the structure of finite-dimensional evolution algebras}
\author[N. Boudi]{Nadia Boudi}
\address{Center CeReMAR, Laboratory LMSA, Faculty of Sciences, Mohammed V University,  Rabat, Morocco}
\email{nadia\_boudi@hotmail.com}

\author[Y. Cabrera ]{Yolanda Cabrera Casado}
\address{Departamento de Matem\'atica Aplicada, Universidad de M\'{a}laga, Spain}
\email{yolandacc@uma.es}

\author[M. Siles]{Mercedes Siles Molina}
\address{Departamento de \'{A}lgebra, Geometr\'{i}a y Topolog\'{\i}a, Universidad de M\'{a}laga, Spain}
\email{msilesm@uma.es}

%\keywords{}
%\subjclass{Primary  }
%***********************************************************************************************
\begin{abstract}
In this paper we introduce the notion of evolution rank and give a decomposition of an evolution algebra into its annihilator plus extending evolution subspaces having evolution rank one. This decomposition can be used to prove that in non-degenerate evolution algebras, any family of natural and orthogonal vectors can be extended to a natural basis. Central results are the characterization of those families of orthogonal linearly independent vectors which can be extended to a natural basis. 

We also consider ideals in perfect evolution algebras and prove that they coincide with the basic ideals.

Nilpotent elements of order three can be localized (in a perfect evolution algebra over a field in which every element is a square) by merely looking at the structure matrix: any vanishing principal minor provides one. Conversely, if a perfect evolution algebra over an arbitrary field has a nilpotent element of order three, then its structure matrix has a vanishing principal minor.

We finish by considering the adjoint evolution algebra and relating its properties to the corresponding in the initial evolution algebra.

\end{abstract}
%************************************************************************
\maketitle
%************************************************************************

%************************************************************************
%************************************************************************
\section{Introduction and preliminaries} \label{sec01}
%\setcounter{theorem}{0}
%************************************************************************
%************************************************************************

The study of evolution algebras has its starting point in 2006 in the paper \cite{VT}, followed by the monograph \cite{T}. A systematic study of (arbitrary dimensional) evolution algebras was initiated in \cite{CSV1}, where a characterization of simple evolution algebras was accomplished. The problem of the classification of finite dimensional evolution algebras is a hard task. As far as we know, there have been classified (up to isomorphisms) the 2 and 3 dimensional evolution algebras  and the perfect non-simple four dimensional (with some mild restrictions); see \cite{CCY, CSV2, CGMMS1, CKS}. A useful tool in order to classify the four dimensional evolution algebras has been the notion of basic ideal: an ideal having a natural basis which can be extended to a natural basis of the whole algebra, introduced in \cite{CKS}. An interesting question related to basic ideals is when a vector in an evolution algebra is a natural vector (i.e., it is part of a natural basis of the evolution algebra). A natural follow up is when a family of orthogonal and independent vectors can be completed to a natural basis of the algebra. These have been the questions which have motivated initially this paper.

We have divided this article into five sections. The first one contains an introduction and preliminaries.  Concretely, we see that the linear span of a subset does not coincide necessarily with a subalgebra or ideal. Homomorphisms are recalled and  an example showing that automorphisms of evolution algebras  do not coincide in general with automorphisms of vector spaces. In fact, linear isomorphisms do not preserve, necessarily, natural bases. Evolution homomorphisms, defined in \cite{T} cannot coincide with algebra homomorphisms: An example of an algebra homomorphism such that the image has not the extension property is given. 

In Section 2 we consider natural families in evolution algebras. In the study of the classification of evolution algebras, an important aspect is when all the natural basis are essentially the same, i.e., all of them can be obtained from one by simply changing the order of the elements or multiplying each element by an scalar. We say in this paper that an evolution algebra having this property ``has a unique natural basis". It was proved in \cite[Theorem 4.4]{EL1} that when an evolution algebra $A$ is perfect (meaning $A^2=A$), then it has a unique natural basis. Corollary \ref{2LIB} finds a condition for an evolution algebra to have a unique natural basis: Having Property (2LI). This property is introduced in this paper for arbitrary evolution algebras and means that the square of two different elements in any natural basis are linearly independent. When an evolution algebra satisfies Condition (2LI), the dimension of $A^2$ must be at least 2. Obviously, any perfect evolution algebra having dimension bigger or equal than 2 satisfies this condition. For any natural number $n$ there exists an $n$-dimensional evolution algebra $A$ such that $A^2$ has dimension 2.

 One of the main results of the paper is Theorem \ref{nat-vec}, which characterizes when a vector is part of a natural basis. We call such elements ``natural vectors". These vectors can be discovered just by looking at the ranges of the squares of the elements ``in their support". Another important result in the paper is the decomposition of an evolution algebra into extending evolution subspaces (Theorem \ref{decomp}); these are, on the one hand, the annihilator of the evolution algebra, and vector subspaces generated by vectors in natural basis such that their squares are all linearly dependent. This decomposition depends on the natural basis, as Example \ref{EjDescomp} shows, but not completely, in the sense that the annihilator and the number of direct summands are always fixed. As a consequence, a block form for the change of basis matrices is obtained  (Corollary \ref{Ast}). We finish the section by providing a condition for a family of linearly independent and orthogonal vectors in a certain evolution subspace to have the extension property  (Proposition \ref{evsubs}). This condition will be used to prove that any family of orthogonal natural vectors can be always extended to a natural basis (Corollary \ref{CORevsubs}). 
 
Theorem 3.6 is the most important result in Section 3. It gives a tool to discover when there exist three elements in an evolution algebra whose product is zero. It happens precisely when the structure matrix has certain principal minors which are zero. In particular, there are nilpotent elements of order three if and only if there is a vanishing principal minor (Corollary \ref{cor:vanishingminor}).

Section 4 is devoted to the study of basic ideals and the relationship among the notions of simple, basic simple and evolution simple algebra, and prove that every ideal in a perfect evolution algebra is basic (Proposition \ref{BasicProp}).

Finally, in Section 5 we compare an evolution algebra and the evolution algebra obtained by transposing the structure matrix of the algebra (Proposition \ref{adjoint}). This algebra will be called the adjoint evolution algebra. We recover the decomposition of an evolution algebra given by Tian when considering persistent and transient elements in a natural basis. The connexion between this decomposition and the adjoint evolution algebra is given in Proposition \ref{AdjTrans}.

%\section{Preliminaries} \label{sec02}
%\setcounter{theorem}{0}
%************************************************************************
%************************************************************************

In this paper, $\bK$ will denote an arbitrary field and
%, while $\bK$ will denote an infinite field with characteristic 0. 
the notation $\bK^\times$ 
%and $\bK^\times$ 
will stand for $\bK\setminus\{0\}$. 
%and $\bK\setminus\{0\}$, respectively. When we do not explicit the field, we will assume that it is an arbitrary field with no restriction. 
Also $\N^\times= \N\setminus\{0\}$.

An \emph{evolution algebra} over a field $\bK$ is a $\bK$-algebra $A$ which has a basis $B=\{e_i\}_{i\in \Lambda}$ such that $e_ie_j=0$ for every $i, j \in \Lambda$ with $i\neq j$. Such a basis is called a \emph{natural basis}.
From now on, all the evolution algebras we will consider will be finite dimensional and  $\Lambda$ will denote the set $\{1, \dots, n\}$.

Let $A$ be an evolution algebra with a natural basis $B=\{e_i\}_{i\in \Lambda}$.
Denote by $M_B=(\omega_{ij})$ the structure matrix of $A$ relative to $B$, i. e., $e_i^2 = \sum_{j\in \Lambda} \omega_{ji}e_j$.

\par \medskip \noindent
Take $u, v$ elements in $A$ and write $u= \sum_{i\in \Lambda} \alpha_i e_i$, $v= \sum_{i\in \Lambda}\beta_i e_i$, where $\alpha_i, \beta_i\in \bK$. 
Note that
\begin{equation*}
  uv= M_B \left(
              \begin{array}{c}
                \alpha_1 \beta_1 \\
                \vdots \\
                \alpha_n \beta_n \\
              \end{array}
            \right)= M_B \; D_u \left(
                                                              \begin{array}{c}
                                                                \beta_1 \\
                                                                \vdots \\
                                                                \beta_n \\
                                                              \end{array}
                                                            \right),
\end{equation*}
 where $D_u=\diag (\alpha_1, \ldots, \alpha_n)$.   \par \noindent

\medskip

\begin{definition}
\rm
Let $A$ be an evolution algebra, $B=\{e_i\}_{i\in \Lambda}$ a natural basis and $u=\sum_{i\in \Lambda}\alpha_ie_i$ an element of $A$.
The \emph{support of} $u$ \emph{relative to} $B$, denoted $\Supp_B(u)$, is defined as the set $\Supp_B(u)=\{i\in \Lambda\ \vert \  \alpha_i \neq 0\}$.   If $X \subseteq A$, we put $\Supp_B (X)= \cup_{x \in X} \; \Supp_B (x)$.
\end{definition}

\begin{notation}
\rm
Let $A$ be an evolution $\bK$-algebra. For a subset $X\subseteq A$ we will use $X^2$ to denote the set $\{xy \ \vert x, y \in X\}$ while $\spa(X)$ will stand for the $\bK$-linear span of $X$, $\alg(X)$ for the subalgebra of $A$ generated by $X$ and  $\langle X\rangle$ for the ideal of $A$ generated by $X$. 
\end{notation}

\begin{definition}
\rm
The \emph{rank} of a subset $X$ of an evolution algebra $A$, denoted by $\rk(X)$, is defined as the dimension (as a vector space) of  ${\rm span}(X)$. 
\end{definition}

\begin{example}
\rm
The ideal generated by a subset $X$ does not coincide necessarily with the linear span of $X$. For an example let $A$ be a four dimensional evolution algebra having a natural basis $B=\{e_1, \dots, e_4\}$ and product given by the structure matrix
$$
\begin{pmatrix}
1 & 1 & 0 & 0 \\
0 & 0 & 0 & 0 \\
0 & 1 & 0 & 1 \\
0 & 0 & 1 & 0
\end{pmatrix}.
$$
Then $\spa(\{e_1, e_2, e_3\})\neq \langle \{e_1, e_2, e_3\}\rangle = \spa(\{e_1, e_2, e_3, {e_3}^2\})= A$.
\end{example}
\medskip

Let $A$ and $A'$ be algebras over a field $\bK$.  A $\bK$-\emph{homomorphism} (or simply  \emph{homomorphism}) will be a $\bK$-linear map $f:A\to A'$. A $\bK$-\emph{algebra homomorphism} is a homomorphism $f:A \to A'$ which satisfies $f(ab)=f(a)f(b)$ for every $a, b\in A$. We will denote by $\rm{Hom}_\bK(A, A')$ and by $\rm{Hom}(A, A')$ the sets of homomorphisms and algebra homomorphisms, respectively, from $A$ to $A'$. When $A'=A$, we will speak about \emph{endomorphisms} and \emph{algebra endomorphisms}, respectively, and the corresponding sets will be denoted by $\rm{End}_\bK(A)$ and $\rm{End}(A)$. Note that $\rm{Hom}_\bK(A, A')$ and $\rm{Hom}(A, A')$ are $\bK$-vector spaces while $\rm{End}_\bK(A)$ and $\rm{End}(A)$ are  associative $\bK$-algebras. The subalgebras of  $\rm{End}_\bK(A)$ and $\rm{End}(A)$ consisting of those bijective maps are denoted by $\rm{Aut}_\bK(A)$ and $\rm{Aut}(A)$. 

Note that $\rm{End}_\bK(A) \supseteq \rm{End}(A)$. The following example shows that a strict  containment $\rm{Aut}_\bK(A) \supseteq \rm{Aut}(A)$ is possible.

\begin{example}
\rm
Let $A$ be an evolution algebra having a natural basis $\{e_1, e_2\}$ and product given by $e_1^2=e_1+e_2$ and $e_2^2=e_2$. Then the linear map $\varphi: A \to A$ given by $\varphi(e_i)=e_j$, for $\{i, j\}= \{1, 2\}$ is an automorphism of vector spaces but it is not an algebra automorphism as $\varphi(e_1^2)=\varphi(e_1+e_2) = \varphi(e_1)+\varphi(e_2)= e_2+e_1 \neq e_2=\varphi(e_1)^2$.
\end{example}

Homomorphisms that will be very useful are those preserving natural basis (i.e., those sending a natural basis to a natural basis of the image), and those preserving the extension property (i.e., sending a natural basis to a natural basis of the image). Not every homomorphism is so nice. 

The following is an example of a linear endomorphism sending a natural basis into a natural basis but which does not act in the same way when applied to any  natural basis.

\begin{example}
\rm
Consider the evolution algebra $A$ with natural basis $B=\{e_1, e_2, e_3\}$ and product given by $e_1^2=e_2^2=e_1$ and $e_3^2=0$. Let $\varphi: A \to A$ be the linear map such that $\varphi(e_1)=e_1+e_3$, $\varphi(e_2)=2e_2+e_3$, $\varphi(e_3)=e_3$. Then $\{\varphi(e_1), \varphi(e_2), \varphi(e_3)\}$ is a natural basis.

Take $B'=\{e_1+e_2, e_1-e_2, e_3\}$, which is a natural basis. Then $\{\varphi(e_1+e_2), \varphi(e_1-e_2), \varphi(e_3)\}=\{e_1+2e_2+2e_3, e_1-2e_2, e_3\}$ is not a natural basis.
\end{example}

Tian speaks in his book \cite{T}, about \emph{evolution homomorphisms} between evolution algebras $A$ and $A'$ as $\bK$-algebra homomorphisms satisfying that $\rm{Im}(f)$ (which turns out to be an evolution algebra) has the extension property (an evolution subalgebra $A'$ of an evolution algebra $A$ is said to have the \emph{extension property} if there exists a natural basis of $A'$ which can be extended to a natural basis of $A$; this definition was introduced in \cite{CSV1}). Note that when dealing with $\bK$-algebra homomorphisms, the image of every natural basis is a natural basis of the image. However, not every algebra homomorphism between evolution algebras is an evolution homomorphism, as the following example shows.

\begin{example}\label{ex: dim1}
\rm
Consider the evolution $\bK$-algebra $A$ with natural basis $\{e_1, e_2\}$ and product given by: $e_1^2 = -e_2^2=e_1+e_2$. Let $\varphi: A \to A$ be the linear map defined by $\varphi(e_1)=-\varphi(e_2)= e_1 + e_2$. Then, it is easy to see that $\varphi$ is an algebra homomorphism. Moreover, $\rm{Im}(\varphi) = \spa(\{e_1+e_2\})$, which has not the extension property (see \cite[Table in pg. 75]{CCY}).
\end{example}

%%%%%%%%%%%%
%%%%%%%%%%%%

\section{Natural families in an evolution algebra}

Let $A$ be an evolution algebra with natural basis $B$ and structure matrix $M$. Any  automorphism of $A$ maps a natural basis onto a generator set.  The automorphism group  of an evolution algebra has been considered in \cite{CGOT, EL1, T}, and also in \cite{CGMMS1}, where the authors determine the automorphism group of any  two-dimensional evolution algebra over an arbitrary field. 

Clearly, if we multiply each element of a natural basis $B$ by a nonzero scalar and reorder the elements, we obtain another natural basis $B'$ of $A$. The corresponding subgroup of the automorphisms group, $S_n\rtimes (\bK^\times)^n$, was described in \cite{CSV2} in the case of $3$-dimensional evolution algebras and in \cite{CKS} for an arbitrary dimension $n$. Thus, the following definition arise:

\begin{definition}
\rm
Let $A$ be an evolution algebra. We say that $A$ \emph{has a unique natural basis} 
if the subgroup of Aut$_\bK(A)$ consisting of those elements that map natural bases into natural bases is $S_n\rtimes (\bK^\times)^n$.
\end{definition}

By \cite[Theorem 4.4]{EL1} every perfect evolution algebra (an algebra $A$ is said to be \emph{perfect} if $A^2 = A$) has a unique natural basis.

In this section we  characterize when an evolution algebra has a unique natural basis. For that purpose we have to introduce the definition of extending natural family and natural vector.

\begin{definitions}
\rm
Let $A$ be an evolution algebra. Two elements $u$ and $v$ of an evolution algebra will be called  \emph{orthogonal} if $uv=0$. A family of vectors  $C$ is said to be an \emph{orthogonal family} if $uv=0$ for any $u, v\in C$, with $u\neq v$.

A family of pairwise orthogonal and linearly independent vectors $\{u_1, \ldots, u_r\}$ of $A$ which can be extended to a natural basis of $A$ will be
called an \emph{extending natural family}. 
If $\{u\}$ is an extending natural family for some $u \in A$, we will say that $u$ is a \emph{natural vector}. 
\end{definitions}

\begin{remark}\label{repeated}
\rm
Orthogonality does not imply linear independency, even if the evolution algebra is non-degenerate. For a simple example, consider  a nonzero element $e$ of an evolution algebra $A$ that satisfies $e^2=0$. Then for every scalar $\lambda$, $e$ and $\lambda e$ are orthogonal.   

For a non-degenerate example, consider an evolution algebra $A$ with a natural basis $\{e_1, e_2\}$ and product given by $e_1^2=e_1+e_2$ and $e_2^2 = -e_1-e_2$. Then $e_1+e_2$ and $-e_1-e_2$ are orthogonal and linearly dependent.  
\end{remark}

The  theorem that follows characterizes natural vectors in terms of {{their}} supports.

\begin{theorem}\label{nat-vec}
Let $A$ be an evolution $\bK$-algebra with natural basis $B= \{e_i\}_{i\in \Lambda}$ and let $u \in A$. Set $\Supp (u)= \{i_1, \ldots, i_r\}$. Then:
\begin{enumerate}[\rm (i)]
\item If $u^2 \neq 0$, then $u$ is a natural vector if and only if $\rk(\{e_{i_1}^2, \dots, e_{i_r}^2\})= 1$.
\item If $u^2 =0 $, then $u$ is a natural vector if and only if $e_{i_1}^2=\dots= e_{i_r}^2 = 0$. 
\end{enumerate}
\end{theorem}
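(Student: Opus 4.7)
My plan is to exploit the identity $u\cdot e_{i_k} = \alpha_{i_k}\, e_{i_k}^2$ (computed directly in the natural basis $B$, writing $u=\sum_{k}\alpha_{i_k}e_{i_k}$ with every $\alpha_{i_k}\neq 0$), and compare it with the same product computed after expanding $e_{i_k}$ in a second natural basis containing $u$. This single identity drives both directions and both cases.

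For the forward implication, I would assume $u$ extends to a natural basis $\{u_1=u,u_2,\ldots,u_n\}$ and write $e_{i_k}=\sum_{j}c_{k,j}u_j$. By the natural basis orthogonality $u\cdot u_j=0$ for $j\geq 2$, the product collapses to $u\cdot e_{i_k}=c_{k,1}u^2$. Equating with $\alpha_{i_k}e_{i_k}^2$ yields $e_{i_k}^2=(c_{k,1}/\alpha_{i_k})u^2$. In case (ii), where $u^2=0$, this forces every $e_{i_k}^2=0$; in case (i), it forces every $e_{i_k}^2$ into the line $\bK u^2$, which is exactly one-dimensional since $u^2\neq 0$, giving rank one.

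For the reverse implication in case (ii) the construction is immediate: replace $e_{i_1}$ by $u$ in $B$. This is a change of basis because $\alpha_{i_1}\neq 0$, and every required new product vanishes since all $e_{i_k}^2=0$ and the $e_j$ with $j\notin\Supp(u)$ are already orthogonal to each $e_{i_k}$. For case (i), the assumption $\rk(\{e_{i_k}^2\})=1$ allows me to write $e_{i_k}^2=\beta_k v$ for a fixed nonzero $v\in A$ and scalars $\beta_k\in\bK$ not all zero. On $W:=\spa(\{e_{i_1},\ldots,e_{i_r}\})$ I introduce the symmetric bilinear form $\phi$ characterized by $xy=\phi(x,y)v$ for $x,y\in W$; in the basis $\{e_{i_k}\}$ it has matrix $\diag(\beta_1,\ldots,\beta_r)$. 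The hypothesis $u^2\neq 0$ translates to $\phi(u,u)\neq 0$, so $W=\bK u \oplus u^{\perp_\phi}$. Diagonalizing $\phi|_{u^{\perp_\phi}}$ produces an orthogonal basis $\{u,w_2,\ldots,w_r\}$ of $W$, and adjoining the untouched $e_j$ for $j\notin\Supp(u)$ yields a natural basis of $A$ containing $u$, because $w_k w_\ell = \phi(w_k,w_\ell)v=0$ for $k\neq\ell$ and $w_k$ is orthogonal to every $e_j$ outside $W$.

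The step I expect to require the most care is this orthogonal diagonalization of $\phi$ containing $u$ as the first vector. Peeling off the $u$-direction is straightforward from $\phi(u,u)\neq 0$, but the recursion on $u^{\perp_\phi}$ relies on the classical fact that a symmetric bilinear form admits an orthogonal basis, which is transparent over characteristic $\neq 2$ and needs additional attention otherwise; one natural workaround is to split the indices $\{i_1,\ldots,i_r\}$ according to whether $\beta_k$ vanishes, treat the $\beta_k=0$ part via the case (ii) construction, and apply the diagonalization only to the non-isotropic block.
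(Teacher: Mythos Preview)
Your argument is sound in characteristic $\ne 2$ and is organised differently from the paper's. For the forward implications the paper packages left multiplication by $u$ as the matrix $M_BD_u$: if $u$ belongs to a natural basis then the other $n-1$ basis vectors lie in $\ker(M_BD_u)$, forcing $\rk(M_BD_u)=\rk\{e_{i_1}^2,\dots,e_{i_r}^2\}\le 1$. Your computation of $u\cdot e_{i_k}$ in the two bases reaches the same conclusion by a more elementary route and avoids introducing $M_BD_u$ altogether. For the converse in~(i) the paper's proof is terse --- it simply asserts that $\dim\ker(M_BD_u)=n-1$ is equivalent to $u$ being natural, without constructing the orthogonal completion; your bilinear-form diagonalisation supplies exactly that construction, and is in fact the same device the paper deploys later in Proposition~\ref{evsubs}. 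So on this point your write-up is the more self-contained of the two.

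Your caution about characteristic $2$ is well founded, but the proposed workaround does not help: the obstruction is not the indices with $\beta_k=0$ but the diagonalisation of $\phi$ restricted to $u^{\perp_\phi}$, which can fail even when every $\beta_k\ne 0$. Indeed the converse in~(i) is actually false in characteristic~$2$. Over any such $\bK$ take $A$ three-dimensional with $e_1^2=e_2^2=e_3^2=e_1$ and $u=e_1+e_2+e_3$; then $u^2=e_1\ne 0$ and $\rk\{e_1^2,e_2^2,e_3^2\}=1$, yet every $v\in u^\perp$ satisfies $v^2=(v_1+v_2+v_3)^2e_1=0$, and for nonzero $v$ the two conditions $uw=0$, $vw=0$ are independent and cut out precisely the line $\bK v$. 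Hence no two linearly independent vectors of $u^\perp$ are mutually orthogonal and $u$ is not a natural vector. The hypothesis $\mathrm{char}\,\bK\ne 2$ is therefore genuinely needed for the converse in~(i); neither your argument nor the paper's can dispense with it.
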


\begin{proof}
Write $u= \sum_i \alpha_i e_i$, where $\alpha_i \in \bK$. 
\par \noindent
(i) Since $u^2 \neq 0 $, $\Dim(\Ker(M_B D_u)) \leq n-1$.
Thus $u$ is a natural vector if and only if $ \Dim \; \Ker \;(M_B D_u)=n-1$. Since $\Supp (u) =\{i_1, \ldots, i_r\}$, we get
\begin{equation*}
  \rk \;(M_B D_u)= \rk(\{e_{i_1}^2, \ldots, e_{i_r}^2\}) = 1,
\end{equation*}
 as desired. 
\par \noindent
(ii) Since $u \in \Ker(M_B D_u)$, $u$ is a natural vector  if and only if $M_B D_u=0$, that is,  $e_{i_1}^2= \ldots =e_{i_r}^2=0$.
\end{proof}

For an evolution algebra $A$, we recall that the \emph{annihilator} of $A$, denoted by $\ann(A)$, is the ideal of $A$ defined as 
$$\ann(A):=\{x \in A \ \vert \ xa=0 \ \text{for any}\ a\in A\}.$$

\begin{remark}
\rm
In an evolution algebra $A$ a vector $u$ such that $u^2=0$ is not necessarily an element in the annihilator of $A$. For an example, consider the evolution algebra having natural basis $\{e_1, e_2\}$ and product given by $e_1^2=e_1$, $e_2^2=-e_1$. Then $u=e_1+e_2$ has zero square but it is not in the annihilator of the algebra, which is zero.
\end{remark}

The  equivalences of the three conditions in Definition \ref{non-deg} can be obtained from  \cite[Definition 2.16]{CSV1}, \cite[Proposition 2.18]{CSV1}  and \cite[Corollary 2.19]{CSV1}.

\begin{definition}\label{non-deg}
\rm
An evolution algebra $A$ is said to be \emph{non-degenerate} if it satisfies the following equivalent conditions:

\begin{enumerate}[(i)]
\item There exists a natural basis $B$ of $A$ such that $b^2\neq 0$ for any $b\in B$.
\item Any natural basis $B$ of $A$ satisfies $b^2\neq 0$ for any $b\in B$.
\item $\ann(A)=\{0\}$.
\end{enumerate}
\end{definition}

As a corollary of Theorem \ref{nat-vec}, we have a characterization of having a unique natural basis.

\begin{corollary} \label{2LIB} Let $A$ be a non-degenerate evolution algebra over $\bK$. Then the following assertions are equivalent:
\begin{enumerate}[\rm (i)]
\item $A$ has a unique natural basis. 
\item There exists a natural basis $B$ such that for any $2$ different vectors $u$ and $v$ of $B$, $u^2$ and $v^2$ are linearly independent.
\end{enumerate}
\end{corollary}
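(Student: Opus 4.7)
The plan is to prove both directions via Theorem \ref{nat-vec}, which detects natural vectors through a rank condition on the squares of basis elements indexed by the support.

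\emph{Direction (ii) $\Rightarrow$ (i).} Fix a natural basis $B=\{e_1,\dots,e_n\}$ witnessing (ii). The core step is to show that \emph{every} natural basis of $A$ has the form $\{\lambda_k e_{\sigma(k)}\}_{k\in \Lambda}$ for some $\sigma\in S_n$ and $\lambda_k\in \bK^\times$. Given any natural basis $\{u_1,\dots,u_n\}$, each $u_k$ is a natural vector, and non-degeneracy of $A$ forces $u_k^2\neq 0$. Theorem \ref{nat-vec}(i) then yields
$$\rk\{e_i^2:i\in \Supp_B(u_k)\}=1,$$
and hypothesis (ii) (distinct squares of basis vectors being pairwise linearly independent) forces $|\Supp_B(u_k)|=1$. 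Hence $u_k=\lambda_k e_{\sigma(k)}$, and linear independence of the $u_k$ makes $\sigma$ a permutation. It follows immediately that the subgroup of $\Aut_\bK(A)$ consisting of elements sending natural bases to natural bases coincides with $S_n\rtimes (\bK^\times)^n$, proving (i). The reverse inclusion in this last equality is immediate since any permutation-scaling of a natural basis is a natural basis.

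\emph{Direction (i) $\Rightarrow$ (ii).} I argue by contrapositive. Assume (ii) fails for every natural basis, and fix some natural basis $B=\{e_1,\dots,e_n\}$ together with indices $i\neq j$ such that $e_j^2=\alpha e_i^2$ with $\alpha\in \bK^\times$ (both squares are nonzero by non-degeneracy). Choose $\beta\in \bK^\times$ with $1+\alpha\beta^2\neq 0$ and set $v:=e_i+\beta e_j$. Then $\Supp_B(v)=\{i,j\}$, $\rk\{e_i^2,e_j^2\}=1$, and $v^2=(1+\alpha\beta^2)e_i^2\neq 0$, so Theorem \ref{nat-vec}(i) identifies $v$ as a natural vector. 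Extending $\{v\}$ to a natural basis $\{v,u_2,\dots,u_n\}$, the linear automorphism $\varphi$ defined by $\varphi(e_1)=v$ and $\varphi(e_k)=u_k$ for $k\geq 2$ maps the natural basis $B$ onto a natural basis, yet $\varphi(e_1)=v$ is not a scalar multiple of any single $e_k$, so $\varphi\notin S_n\rtimes (\bK^\times)^n$. This contradicts (i).

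The step I expect to be the main obstacle is the choice of $\beta$ in the second direction: the equation $1+\alpha\beta^2=0$ has at most two solutions in $\bK$, so a suitable $\beta\in \bK^\times$ exists whenever $\bK$ has enough elements, but care is required over very small fields. If necessary one can exploit the additional freedom of varying the pair $(i,j)$ or considering vectors of the form $\gamma e_i+\beta e_j$ with both coefficients nonzero. The rest of the argument is a rather direct application of Theorem \ref{nat-vec} together with non-degeneracy.
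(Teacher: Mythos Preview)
Your proof is correct and follows essentially the same route as the paper's. Both directions hinge on Theorem~\ref{nat-vec} in the same way: for (ii)$\Rightarrow$(i) the rank-one condition on squares forces every natural vector to have singleton support (the paper compresses this to ``follows immediately from Theorem~\ref{nat-vec}'', while you spell it out); for (i)$\Rightarrow$(ii) both you and the paper build a natural vector $e_i+\beta e_j$ with two-element support and nonzero square, contradicting uniqueness. Your explicit construction of the automorphism $\varphi$ is a harmless extra step---the paper simply stops at ``$w$ is a natural vector, contradiction''. The caveat you flag about small fields is exactly the same one the paper sweeps under the phrase ``since $\bK$ has strictly more than three elements''; neither argument fully handles $\bF_2$ or $\bF_3$, and indeed the statement can fail there (e.g.\ over $\bF_2$ with $n=2$ and $e_1^2=e_2^2=e_1$, the only natural basis is $\{e_1,e_2\}$ yet (ii) fails).
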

\begin{proof}  Suppose that (i) holds true and let $u$, $v$ be two different vectors of  the natural basis $B$ of $A$.  Assume that $\rk \{u^2, v^2\}=1$. Being
%there exists a nonzero $\alpha \in \bK$ such that $ u^2+\alpha ^2 v^2 \neq 0$.  Indeed, 
%
$A$ non-degenerate implies $u^2 \neq 0$;  write $ v^2= \beta u^2$ for some nonzero $\beta \in \bK$; since $\bK$ has strictly more than three elements, we can find $\alpha\in \bK$ such that $\alpha$ is not a root of the polynomial $x^2+{1/\beta}$, i.e., $1+\alpha^2\beta\neq 0$. 
Then the element $w= u+ \alpha v$ satisfies $w^2= (1+\alpha^2\beta)u^2\neq 0$ and,  by Theorem \ref{nat-vec}, $w$ is a natural vector.  This contradicts the uniqueness of $B$.  The converse
(i) $\Rightarrow$ (ii) follows immediately from Theorem \ref{nat-vec}.
\end{proof}

The definition that follows is a generalization of the notion of Property (2LI) given in \cite[Definition 3.4]{CSV2} for three dimensional evolution algebras.

\begin{definition}\label{Def:PropmLI}
\rm
Let $A$ be an evolution algebra of dimension $n$. We say that $A$ has  \emph{Property} \rm{(2LI)} if  for any different vectors $e_i, e_j$ of a natural basis, the set  $\{e_i^2, e_j^2\}$ is linearly independent. 
\end{definition}

This definition is consistent because it does not depend on the selected natural basis, as follows from  Corollary \ref{2LIB}.

\begin{example}\label{Ex:DimDos}
\rm
For every natural number $n$ there exists an $n$-dimensional evolution algebra having Property (2LI) such that $\dim A^2=2$.
Let $\bK$ be an infinite field and let $A$ be the evolution $\bK$-algebra having basis $\{e_1, \dots, e_n\}$, for $n>2$, and product given by $e_1^2=e_1$, $e_2^2=e_2$, $e_i^2=e_1+ie_2$, where $i\in \{3, \dots, n\}$. Then $A$ has the property  (2LI).
\end{example}

\medskip
\begin{definitions}
\rm
Let $A$ be an evolution algebra. Any (linear) subspace $E$ of $A$  generated by {an} extending natural family will be called an \emph{extending evolution subspace}  of $A$. Such a family will be called an \emph{extending natural basis} of $E$.
 The \emph{evolution rank} of $E$ is defined by
\begin{equation*}
  \erk(E)= \Dim (\spa\{u_1^2, \ldots, u_r^2\}),
\end{equation*}
where $\{u_1, \ldots, u_r\}$ is an extending natural basis of $E$. Note that $\erk(E)=\dim (E^2)$.

Clearly, $\erk(E)$ does not depend on the choice of the extending natural basis of $E$.
\end{definitions}

\begin{theorem} \label{decomp} Let $A$ be an evolution algebra {{ and let $r= \dim A^2$.  Then}}
\begin{equation*}
  A= \ann(A) \oplus E_1 \oplus \ldots \oplus E_r,
\end{equation*}
where  $E_1, \ldots, E_r$ are extending evolution subspaces of $A$ satisfying $\erk (E_i)=1$ for all $i$ and if $i \neq j$, $E_i  E_j = 0$,   $\Dim \; ( E_i^2 +E_j^2)=2$. 
Moreover, if $A$ is non-degenerate, the decomposition is unique.
\end{theorem}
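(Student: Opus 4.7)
I would build the decomposition explicitly from a natural basis and then address the count $r=\dim A^2$ and uniqueness. First, fix a natural basis $B=\{e_1,\dots,e_n\}$ of $A$ and split it as $B=B_0\sqcup B^+$ with $B_0=\{e\in B:e^2=0\}$. A direct computation gives $\ann(A)=\spa(B_0)$: every $e\in B_0$ is orthogonal to the rest of $B$ and squares to zero, and for $u=\sum\alpha_ie_i\in\ann(A)$ the identity $ue_j=\alpha_je_j^2$ forces $\alpha_j=0$ whenever $e_j\in B^+$. On $B^+$ introduce the equivalence $e\sim f\iff \spa\{e^2\}=\spa\{f^2\}$, let $C_1,\dots,C_s$ be its classes, and set $E_i=\spa(C_i)$.

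Each $C_i$ is an orthogonal linearly independent subset of $B$ which itself extends $C_i$, so $E_i$ is an extending evolution subspace. By the definition of $\sim$, the squares of all vectors in $C_i$ share a common one-dimensional line, so $\erk(E_i)=\dim E_i^2=1$; distinct classes correspond to distinct lines, so $\dim(E_i^2+E_j^2)=2$ for $i\neq j$; and $E_iE_j=0$ is immediate from $B$ being natural, by bilinearity. The partition $B=B_0\sqcup C_1\sqcup\cdots\sqcup C_s$ then yields the direct sum $A=\ann(A)\oplus E_1\oplus\cdots\oplus E_s$.

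The delicate step is the count $s=r=\dim A^2$. Since $E_iE_j=0$, we have $A^2=E_1^2+\cdots+E_s^2$, and because the summands are pairwise distinct one-dimensional subspaces, the equality $s=\dim A^2$ amounts to the linear independence of these lines in $A^2$. I expect this to be the main obstacle: a generic natural basis may produce redundant lines, so the proof must first replace $B$ by a natural basis in which the family $\{e^2:e\in B^+\}$ carries no linear dependencies beyond parallelism. Theorem \ref{nat-vec} provides exactly the flexibility needed, as any nontrivial dependency allows one to exhibit an alternative natural vector whose square already lies on a previously chosen line, shrinking the number of classes until it drops to $r$. For uniqueness in the non-degenerate case, suppose $A=F_1\oplus\cdots\oplus F_r$ is another such decomposition; applying Theorem \ref{nat-vec} to any natural vector $u$ shows that $u^2$ is proportional to $e^2$ for every $e$ in its support, hence $u^2$ lies on a single line of $A^2$. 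Each $F_j$ therefore coincides with the span of all natural vectors whose square lies on the line $F_j^2$, an intrinsic description shared by the $E_i$, so the two decompositions agree up to reordering.
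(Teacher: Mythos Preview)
Your construction of the decomposition---partitioning $B$ into $B_0=\{e:e^2=0\}$ and the $\sim$-classes on $B^+$, and your uniqueness argument via Theorem~\ref{nat-vec}---are exactly what the paper does, only phrased more explicitly. So for existence (up to the count) and for uniqueness you are on the paper's track.

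You are right to single out the equality $s=\dim A^2$ as the delicate point; the paper's proof simply writes ``$B=B_0\cup B_1\cup\ldots\cup B_r$'' and never checks that the number of $\sim$-classes equals $\dim A^2$. However, your proposed repair does not work. Theorem~\ref{nat-vec} says a vector $u$ is natural only when \emph{all} of $e_{i_1}^2,\dots,e_{i_k}^2$ (the squares over its support) are pairwise parallel. A linear dependency such as $e_3^2=\alpha e_1^2+\beta e_2^2$ with $e_1^2,e_2^2$ independent therefore produces \emph{no} new natural vector at all, so there is nothing to ``shrink''. Concretely, take Example~\ref{Ex:DimDos} with $n=3$: the algebra has Property~(2LI), so by Corollary~\ref{2LIB} every natural vector is a scalar multiple of some $e_i$; there are three $\sim$-classes and no change of natural basis can reduce that number, yet $\dim A^2=2$. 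This shows both that your basis-replacement strategy fails and that the equality $r=\dim A^2$ asserted in the statement is not achievable in general; what the construction actually yields (and what the paper's argument actually proves) is a decomposition indexed by the $\sim$-classes, whose number is at least $\dim A^2$ but can be strictly larger.
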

\begin{proof} Let $B$ be a natural basis of $A$. Taking into account that the annihilator of $A$ is the subalgebra of $A$ generated, as a vector space, by those elements in any natural basis whose square is zero (see \cite[Proposition 2.18 (i)]{CSV1}), we can write $B$ as
$B= B_0 \cup B_1 \cup \ldots \cup B_r$, where $B_0$ is a basis of $\ann (A)$,
\begin{equation*}
  \Dim(\spa \; \{e^2\ \vert \ e \in B_i\})=1, \;\; \mbox { for all } i \geq 1
\end{equation*}
and
\begin{equation*}
  \Dim(\spa \;\{u^2, v^2\})=2 \;\;\;  \mbox { if } u \in B_i, v \in B_j
\end{equation*}
for  all $i \neq j$ with  $1 \leq i, j \leq r$. Now let $E_i$ be the vector subspace generated by $B_i$, which is an extending evolution subspace, and then we get the desired decomposition. \par \noindent
Next suppose that $A$ is non-degenerate and let us show that  the above decomposition is unique. Write
\begin{equation*}
  A=  E'_1 \oplus \ldots \oplus E'_s,
\end{equation*}
where $E'_1, \ldots, E'_s$ are extending evolution subspaces of $A$ satisfying $\erk (E'_i)=1$;  $E'_i \; E'_j = 0$    and    $\Dim \; ( (E'_i)^2 +(E'_{j})^2)=2$, for all $i, j$ being $i \neq j$.
 Fix $t \in \{1, \ldots, s\}$ and let $B'$ be an extending natural basis of $E'_t$.  Let $u \in B'$  with
$\Supp (u)= \{i_1, \ldots, i_k \}$. Since $u^2 \neq 0$,   by Theorem \ref{nat-vec}, $\Dim(\spa\;\{e_{i_1}^2, \ldots, e_{i_k}^2\})= 1$. By the above construction, there exists $j_t$ such that
 $ \{e_{i_1}, \ldots, e_{i_k}\} \subseteq  E_{j_t}$. Since $\erk \; (E'_t)=1$, and $A$ is non-degenerate, we must have $B' \subseteq E_{j_t}$. This implies that $E'_t \subseteq E_{j_t}$.  Since
$\Dim \; ( (E'_t)^2 +(E'_{k})^2)=2$
for $t \neq k$, we infer that  $j_t \neq j_k$.  A classical argument shows that $r=s$ and $E'_t= E_{j_t}$ for all $t$.
\end{proof}

The following example shows that the above decomposition depends on the natural basis.
\begin{example}\label{EjDescomp}
\rm
Let $A$ be the evolution algebra with natural basis $\{ e_1, e_2, e_3\}$ and product defined by
\[ e_1^2= 0,\;\;  e_2^2 =e_2 \;\;\; \mbox {and } \;\;\; e_3^2=e_3.\]
Then $\{ e_1, e_1+e_2, e_3\}$ is a natural basis and the two bases generate  different decompositions for $A$ in the sense of Theorem  \ref{decomp} 
\[ A= \bK e_1 \oplus \bK e_2 \oplus \bK e_3= \bK e_1 \oplus \bK (e_1+e_2) \oplus \bK e_3.\]
\end{example}

\begin{corollary}\label{Ast}
Let $A$ be an evolution algebra and let $B=B_0 \cup B_1\cup \dots \cup B_r$ and $B'=B'_0 \cup B'_1\cup \dots \cup B'_r$ be two natural bases of $A$ given by two decompositions as in Theorem \ref{decomp}, where $B_0$ and $B'_0$ are bases of $\ann (A)$. Then, we can reorder the elements of $B$ and $B'$ so that
the change of basis matrix  has the following block form
\begin{equation*}
\left(
  \begin{array}{ccccc}
    * & * & * & \ldots & * \\
    0 & * & 0 &  \ldots&  0\\
    0 & 0 & * &  \ldots& 0 \\
    \vdots& \vdots & \vdots &  \ddots& 0 \\
    0 & 0 & 0 & \ldots & * \\
  \end{array}
\right).
\end{equation*}
\end{corollary}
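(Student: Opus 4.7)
The plan is to build a permutation $\sigma$ of $\{1,\dots,r\}$ so that, after reindexing $B'_1,\dots,B'_r$ via $\sigma$, each block $B'_t$ (with $t\geq 1$) is contained in $\spa(B_0\cup B_t)$. Once this containment is established, the assertion falls out by reading the change of basis matrix column-by-column: the $B'_0$-columns are supported only on the $B_0$-rows (because $B'_0\subseteq\ann(A)=\spa(B_0)$), while the $B'_t$-columns (for $t\geq 1$) are supported only on the $B_0$- and $B_t$-rows. This is precisely the displayed block pattern.

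To define $\sigma$, fix $t\geq 1$ and $u\in B'_t$. By construction in Theorem~\ref{decomp}, $u^2\neq 0$ and $u$ is a natural vector of $B'$. Expanding $u$ in $B$ and applying Theorem~\ref{nat-vec}(i), the set $\{e_i^2:i\in\Supp_B(u)\}$ has rank one. The indices with $e_i^2=0$ correspond to $e_i\in B_0$; for the remaining indices, the squares $e_i^2$ are nonzero and pairwise proportional, so by the defining property of the blocks $B_1,\dots,B_r$ (squares of elements from distinct $B_i,B_j$ are linearly independent) they must all lie in a single $B_{\sigma(u)}$. Hence $u\in\spa(B_0\cup B_{\sigma(u)})$. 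If $u,v\in B'_t$, then $u^2,v^2$ are nonzero and proportional since they lie in the one-dimensional space $(E'_t)^2$; combined with $u^2\in E_{\sigma(u)}^2$, $v^2\in E_{\sigma(v)}^2$ and $\Dim(E_i^2+E_j^2)=2$ for $i\neq j$, this forces $\sigma(u)=\sigma(v)$, so $\sigma(t):=\sigma(u)$ is well-defined. The same rank argument yields injectivity: if $\sigma(t_1)=\sigma(t_2)=i$ for $t_1\neq t_2$, then $(E'_{t_1})^2$ and $(E'_{t_2})^2$ would both embed into the one-dimensional space $E_i^2$, contradicting $\Dim((E'_{t_1})^2+(E'_{t_2})^2)=2$. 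Hence $\sigma$ is a permutation of $\{1,\dots,r\}$.

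Finally, reorder $B'_1,\dots,B'_r$ by $\sigma^{-1}$ to achieve $B'_t\subseteq\spa(B_0\cup B_t)$ for every $t\geq 1$, and express each element of the reindexed $B'$ in the basis $B$; the resulting matrix has precisely the block form displayed in the statement. The main obstacle is really the double invocation of the rank condition $\Dim(E_i^2+E_j^2)=2$ from Theorem~\ref{decomp}: once to check that the elementwise map $u\mapsto\sigma(u)$ is constant on each block $B'_t$, and a second time to verify its injectivity on blocks. Once $\sigma$ is in hand, everything else is routine bookkeeping of coefficients.
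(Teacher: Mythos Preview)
Your proof is correct and follows essentially the same route the paper uses implicitly: the corollary is stated without proof, but your construction of the permutation $\sigma$ mirrors exactly the argument in the uniqueness part of Theorem~\ref{decomp} (applying Theorem~\ref{nat-vec}(i) to locate the support of each $u\in B'_t$ inside a single $E_{j_t}$, then using the condition $\dim(E_i^2+E_j^2)=2$ twice to get well-definedness and injectivity). The only addition over that uniqueness argument is that you correctly allow part of $\Supp_B(u)$ to land in $B_0$ when $A$ is degenerate, which is precisely what produces the full first block row of $*$'s in the displayed matrix.
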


\begin{remark}
\rm
The block form above means that the change of basis matrix sends the elements of $B_0$ to the linear span of the elements of $B'_0$ and, in general, the elements of $B_i$ are sent to the linear span of the elements of $B'_0\cup B'_i$.
\end{remark}

\begin{remark}
\rm
The block form in Corollary \ref{Ast} appears in \cite[Corollary 3.6]{EL2} for nilpotent evolution algebras.
\end{remark}

\begin{proposition} \label{evsubs} Let $A$ be an evolution algebra and let $E$ be an extending evolution subspace of $A$ with evolution rank one and such that $E \cap \ann (A)=\{0\}$.
 Let $C$ be a linearly independent orthogonal family  of $E$. Then  $C$ can be extended to a natural basis of $E$, which can be extended to a natural basis of $A$, if and only if $u^2 \neq 0$ for all $u\in C$.
\end{proposition}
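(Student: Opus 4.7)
The plan is to translate the statement into a question about orthogonal bases for a symmetric bilinear form on $E$ and then use a Gram--Schmidt-style induction. First I would fix an extending natural basis $\{u_1,\dots,u_m\}$ of $E$ completing to a natural basis $\{u_1,\dots,u_m,v_1,\dots,v_s\}$ of $A$. The hypothesis $E\cap \ann(A)=\{0\}$ together with $u_iu_j=u_iv_k=0$ for all $i\neq j$ and all $k$ forces $u_i^2\neq 0$ for every $i$; since $\erk(E)=1$, there exist $0\neq w\in A^2$ and $\lambda_i\in\bK^\times$ with $u_i^2=\lambda_iw$. For $c=\sum_i\alpha_iu_i$ and $c'=\sum_i\alpha_i'u_i$ in $E$, one then has $cc'=b(c,c')\,w$, where
\[
b(c,c'):=\sum_{i=1}^m \lambda_i\alpha_i\alpha_i',
\]
is a non-degenerate symmetric bilinear form on $E$; moreover $cv_j=0$ for every $j$. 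Consequently, orthogonality in $E$ for the algebra product coincides with $b$-orthogonality, and the condition $u^2\neq 0$ is equivalent to $b(u,u)\neq 0$.

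For the $(\Rightarrow)$ direction, if $C=\{c_1,\dots,c_p\}$ extends to a natural basis of $E$ which in turn extends to one of $A$, then each $c_k$ lies in a natural basis of $A$. If additionally $c_k^2=0$, then $c_k$ would be orthogonal to every member of that basis, so $c_k\in\ann(A)\cap E=\{0\}$, contradicting the linear independence of $C$. Hence $c_k^2\neq 0$ for every $k$.

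For the $(\Leftarrow)$ direction, assume $b(c_k,c_k)\neq 0$ for every $c_k\in C$. The restriction of $b$ to $\spa(C)$ is diagonal with nonzero entries in the basis $C$, so it is non-degenerate; this yields the orthogonal decomposition $E=\spa(C)\oplus W$ with $W:=C^\perp$, $\dim W=m-p$, and $b|_W$ again non-degenerate. The plan is to build an orthogonal basis $\{c_{p+1},\dots,c_m\}$ of $W$ consisting of non-isotropic vectors by induction on $\dim W$: pick $v\in W$ with $b(v,v)\neq 0$, set $c_{p+1}:=v$, split $W=\bK v\oplus v^{\perp_W}$, and iterate on $v^{\perp_W}$, where $b$ remains non-degenerate. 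Since orthogonal vectors with nonzero self-pairing are automatically linearly independent, $\{c_1,\dots,c_m\}$ is a natural basis of $E$. Appending $v_1,\dots,v_s$ then produces a natural basis of $A$, because every $c_k\in\spa\{u_i\}$ is algebra-orthogonal to each $v_j$ by the original natural basis relations.

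The hard part is the inductive step: producing, in each non-degenerate subspace arising during the induction, a vector $v$ with $b(v,v)\neq 0$. Over fields of characteristic $\neq 2$ this is immediate by polarization, since the vanishing of the diagonal on $W$ would force $b|_W=0$, contradicting non-degeneracy. In characteristic $2$ one must exploit that the ambient form $\diag(\lambda_1,\dots,\lambda_m)$ with all $\lambda_i\neq 0$ is not alternating in order to locate a non-isotropic element in each orthogonal complement; this is the delicate point that must be handled to make the argument work uniformly over $\bK$.
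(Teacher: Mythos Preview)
Your approach mirrors the paper's almost exactly: both introduce the symmetric bilinear form $b$ on $E$ via $uv=b(u,v)\,e_1^2$, show that $b$ is non-degenerate because $E\cap\ann(A)=\{0\}$, verify that $b|_{\spa(C)}$ is non-degenerate (diagonal with nonzero entries in the basis $C$), split $E=\spa(C)\oplus C^{\perp}$, and then appeal to the existence of an orthogonal basis of $C^{\perp}$. The paper simply asserts this last step (``Therefore $C$ can be extended to an orthogonal basis $C'$ of $E$'') without comment, whereas you explicitly flag it as the delicate point in characteristic~$2$.

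You are right that the step is delicate, but the gap cannot be closed: the statement, as written for an arbitrary field, fails in characteristic~$2$. Take $\bK=\bF_2$ (or $\bF_4$), let $A=E$ be three-dimensional with natural basis $\{e_1,e_2,e_3\}$ and $e_i^2=e_1$ for all $i$, so that $\erk(E)=1$ and $\ann(A)=\{0\}$. Set $u=e_1+e_2+e_3$; then $u^2=3e_1=e_1\neq 0$, so $C=\{u\}$ satisfies the hypothesis. Here $b$ is the standard diagonal form, and $u^{\perp}=\{(\alpha,\beta,\gamma):\alpha+\beta+\gamma=0\}$. In characteristic~$2$ every $v=(\alpha,\beta,\gamma)\in u^{\perp}$ satisfies $b(v,v)=\alpha^2+\beta^2+\gamma^2=(\alpha+\beta+\gamma)^2=0$, so $b|_{u^{\perp}}$ is a non-degenerate \emph{alternating} form on a two-dimensional space and admits no orthogonal basis. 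One checks directly that no two linearly independent vectors of $u^{\perp}$ are algebra-orthogonal, hence $C=\{u\}$ cannot be extended to a natural basis of $E=A$. Your hope that the non-alternating ambient form $\diag(\lambda_1,\dots,\lambda_m)$ saves the day is therefore unfounded: restriction to $C^{\perp}$ can produce an alternating form. The argument (yours and the paper's) requires $\operatorname{char}\bK\neq 2$, under which hypothesis your Gram--Schmidt induction goes through without difficulty.
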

\begin{proof} Let $B=\{e_1, \ldots, e_r\}$ be an extending natural basis of  $E$ and let $B'\subseteq A$ be such that $B\sqcup B'$ is a natural basis of $A$. For every $i\in \{1, \dots, r\}$ there exists $\lambda_i \in \bK ^\times$ such that $e_{i}^2= \lambda_i e_1^2$. The reason for $\lambda_i \neq 0$ is that $E \cap \ann (A)=\{0\}$ and \cite[Proposition 2.18 (i)]{CSV1}.
Consider the bilinear form $b$ defined  on $E$ by
\begin{equation*}
  b\; (\sum_{i=1}^r \alpha_i e_i, \sum_{i=1}^r \beta_i e_i)= \sum_{i=1}^r \lambda_i \alpha_i \beta_i, \;\; \alpha_i, \beta_i \in \bK.
\end{equation*}
Note that $b$ is symmetric, non-degenerate and $u v= b(u,v) e_1^2$ for all $u,v \in E$. Suppose that $u^2 \neq 0$ for all $u \in C$. Set $V= \spa \;(C)$ and $b'=b|_V$. We are going to prove that $b'$ is non-degenerate. Suppose that $b'(v,.)=0$ for some $v \in V$.
Then $v u=0$ for all $u \in C$. Write $v= \sum_{i=1}^s \alpha_i u_i$, where $u_i \in C$; then $\alpha_i u_i^2=0$ for all $i$. This implies that $v=0$. Whence $b'$ is non-degenerate and we have $V \oplus V^\perp =E$. Therefore
$C$ can be extended to an orthogonal  basis $C'$ of $E$. 
%If $B' \subseteq A$   
Note that  $C' \sqcup B'$ is a natural basis of $A$ and  $C'$ is a natural basis of $E$. The converse is obvious.
\end{proof}

A consequence of Proposition \ref{evsubs} follows.

\begin{corollary}\label{CORevsubs}
Let $A$ be a non-degenerate evolution algebra and let $B'=\{u_1, \ldots, u_r\}$ be a family of orthogonal vectors such that $u_i$ is a natural vector of $A$ {\color{blue}{ for every $i$}}.
Then $B'$ can be extended to a natural basis of $A$.
\end{corollary}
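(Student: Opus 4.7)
The plan is to combine the decomposition theorem (Theorem~\ref{decomp}) with Proposition~\ref{evsubs}, treating each summand separately. Since $A$ is non-degenerate, $\ann(A)=\{0\}$, so Theorem~\ref{decomp} yields a (unique) decomposition $A=E_1\oplus\cdots\oplus E_s$ into extending evolution subspaces with $\erk(E_k)=1$, $E_kE_\ell=0$ and $\dim(E_k^2+E_\ell^2)=2$ whenever $k\neq\ell$. Fix a natural basis $B=B_1\sqcup\cdots\sqcup B_s$ realising this decomposition, with $B_k$ a natural basis of $E_k$.

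My first task would be to localise each $u_i$ inside a single component $E_{\sigma(i)}$. By non-degeneracy, $u_i^2\neq 0$: otherwise Theorem~\ref{nat-vec}(ii) would force every basis element of $\Supp_B(u_i)$ into $\ann(A)=\{0\}$, hence $u_i=0$, contradicting the fact that a natural vector is nonzero. Theorem~\ref{nat-vec}(i) then gives $\rk\{e_j^2 : j\in\Supp_B(u_i)\}=1$. Since squares of basis elements coming from different $B_k$'s are linearly independent by the construction in Theorem~\ref{decomp}, all the $e_j$ with $j\in\Supp_B(u_i)$ must lie in a single block $B_{\sigma(i)}$, and therefore $u_i\in E_{\sigma(i)}$.

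Next I would group the vectors by component, setting $C_k=\{u_i : \sigma(i)=k\}$ for each $k\in\{1,\ldots,s\}$. Each $C_k$ is orthogonal (inherited from $B'$), consists of vectors with nonzero square, and is linearly independent: a relation $\sum_{u_i\in C_k}\alpha_i u_i=0$ multiplied by any $u_j\in C_k$ yields $\alpha_j u_j^2=0$, so $\alpha_j=0$. Since $E_k\cap\ann(A)=\{0\}$, Proposition~\ref{evsubs} applies and extends $C_k$ to a natural basis $C'_k$ of $E_k$. The union $\bigsqcup_{k=1}^s C'_k$ is then a natural basis of $A$—vectors coming from different summands are automatically orthogonal because $E_kE_\ell=0$—and it clearly contains $B'=\bigsqcup_k C_k$. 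The only delicate step is the localisation of each $u_i$ in one $E_{\sigma(i)}$; once that is in place, the result reduces to a single application of Proposition~\ref{evsubs} per summand.
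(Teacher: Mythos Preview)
Your proof is correct and follows essentially the same approach as the paper's own proof: apply Theorem~\ref{decomp} to decompose $A$, use Theorem~\ref{nat-vec} to localise each $u_i$ in a single summand, and then invoke Proposition~\ref{evsubs} componentwise. Your version is in fact more explicit than the paper's, spelling out both the localisation argument and the linear-independence verification that the paper leaves implicit.
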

\begin{proof} By Theorem \ref{decomp} we get a (unique) decomposition $A= A_1 \oplus \ldots \oplus A_s$. Take $u\in B'$. Observe that $u^2 \neq 0$ since $u$ is a natural vector and $A$ is non-degenerate. By Theorem \ref{nat-vec} (i), there exists $i\in \{1, \dots, s\}$ such that $u\in A_i$. This provides a decomposition $B'= \cup_{i=1}^t B_i$,  with $B_i \subseteq A_i$. Since $B'$ is linearly independent (because $u^2\neq 0$ for each $u\in B'$ and the elements of $B'$ are orthogonal), each $B_i$ is a linearly independent orthogonal family of $A_i$. By Proposition \ref{evsubs}, $B_i$ can be extended to a natural basis of $A_i$.Therefore, $B'=\cup_{i=1}^t B_i$ can be extended to a natural basis of $A$, as required.

%Suppose first that $\{u_i^2, u_j^2\}$ is linearly independent for $i \neq j$. Then
%there is a permutation {$\sigma \in S_s$} such that
%$u_i$ lies in $A_{\sigma (i)}$ for every $i$. {Now,} the extension of $B'$ to a natural basis {follows taking into account that every $u$ in $B'$ is a natural vector and the uniqueness of the decomposition}.  Next  suppose that $B'= \cup_{i=1}^t B_i$,  with $B_i \subseteq A_i$.
%Then we get the desired decomposition by using Proposition \ref{evsubs}.
\end{proof}

%\begin{corollary}
%Let $A$ be a non-degenerate evolution algebra. Then, for every $e\in A$ such that $e^2\neq 0$, the family $\{e\}$ can be extended to a natural basis of $A$.
%\end{corollary}

%%%%%%%%%%%%%%%%%%%%%%%%%%%%
%%%%%%%%%%%%%%%%%%%%%%%%%%%%
%%%%%%%%%%%%%%%%%%%%%%%%%%%%

\section{Orthogonal elements and nil elements in an evolution algebra}

%%%%%%%%%%%%%%%%%%%%%%%%%%%%
%%%%%%%%%%%%%%%%%%%%%%%%%%%%
%%%%%%%%%%%%%%%%%%%%%%%%%%%%

 Let $A$ be an evolution algebra over  $\mathbb K$. For every natural number $k $ and $a \in A$, we write $a^1=a$ and  $a^k= aa^{k-1}$. Moreover, we denote $A^1= A^{\langle 1\rangle }=A$, $A^{k+1}= \sum_{i=1}^k A^iA^{k+1-i}$ and $A^{\langle k+1\rangle }= A^{\langle k\rangle } A$.  The algebra $A$ is said to be \emph{nilpotent} if $A^k=0$ for some $k$ and  it is said to be \emph{right nilpotent}{ if $ A^{\langle k\rangle }=0$ for some $k$.  An element $a$ of $A$ is said to be \emph{nil} if there exists $k \in \bN^\times$ such that $a^k=0$ and the algebra $A$ will be called \emph{nil} if every element is nil.
 
 \begin{remark}\label{nilpot}
 \rm
 Let $A$ be an evolution algebra. 
 \begin{enumerate}[\rm (i)]
 \item Then $A$ is nilpotent if and only if $A$ is right nilpotent if and only if $A$ is nil, if and only if there exists a basis $B$ in $A$ such that $M_B$ is strictly upper triangular.
To see this, use that a commutative algebra is nilpotent if and only if it is right nilpotent (this was shown in \cite{ZSSS}). On the other hand, the equivalence among the other three conditions is proved in \cite[Theorem 2.7]{CGOT}.
\item Assume $A$ nilpotent having dimension $n$. Then, being $M_B$  strictly upper triangular (for some natural basis $B$) implies $A^{<n+1>}=0.$ 
\item The evolution algebra $A$ is right nilpotent of order $k$, i.e., $A^{<k>}=0$, if and only if every natural basis $B$ can be reordered in such a way that $M_B$ is strictly upper triangular, and the row $i$ is equal to zero for every $i \geq k-1$. The case $k=3$ yields a nice characterization.
\end{enumerate}
\end{remark}

  \begin{lemma}\label{nilp-3} Let $A$ be an $n$-dimensional evolution algebra and $B$ a natural basis. Then $A^3=0$ if and only if for every $i\in \{1, \dots, n\}$, either the $i$-th row of the matrix $M_B$ is zero or the $i$-column is zero. More concretely, if the dimension of $\ann(A)=r$, then we may reorder the basis $B$ so that every row  from the $(r+1)$-th to the last one in $M_B$ is zero. \end{lemma}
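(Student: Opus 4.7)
The plan is to prove both directions of the equivalence by reducing $A^3=0$ to a condition on products of three basis elements, and then unwind that condition in terms of the entries of $M_B$. First, I would observe that since $A$ is commutative, $A^3 = A\cdot A^2 + A^2 \cdot A = A \cdot A^2$, so that $A^3 = 0$ if and only if $e_i \cdot e_j^2 = 0$ for all $i, j \in \{1, \dots, n\}$ (the check on $(e_ie_j)e_k$ gives the same condition via commutativity). Using the natural-basis relations $e_ie_k = 0$ for $k \neq i$ together with $e_j^2 = \sum_k \omega_{kj} e_k$, a one-line computation yields
\begin{equation*}
e_i \cdot e_j^2 = \sum_k \omega_{kj}\, e_i e_k = \omega_{ij}\, e_i^2.
\end{equation*}

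Hence $A^3 = 0$ if and only if $\omega_{ij}\, e_i^2 = 0$ for every $i, j$. For a fixed index $i$, this says that either $e_i^2 = 0$, which is precisely the vanishing of the $i$-th column of $M_B$, or $\omega_{ij} = 0$ for all $j$, which is precisely the vanishing of the $i$-th row. This gives the first equivalence.

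For the ``more concretely'' part, I would invoke \cite[Proposition 2.18 (i)]{CSV1}, which is already used elsewhere in the excerpt, to identify $\ann(A)$ with $\spa\{e_i : e_i^2 = 0\}$; in particular the set $S = \{i : e_i^2 = 0\}$ has cardinality exactly $r$. For every $i \notin S$ the $i$-th column of $M_B$ is nonzero, so by the dichotomy just established the $i$-th row of $M_B$ must be zero. I would then reorder the basis so that the indices in $S$ occupy positions $1, \dots, r$ and those in $\{1,\dots,n\}\setminus S$ occupy positions $r+1, \dots, n$; since permuting a natural basis conjugates $M_B$ by a permutation matrix (permuting rows and columns by the same permutation), rows $r+1, \dots, n$ of the new structure matrix are all zero, as required.

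The whole argument is essentially a bookkeeping exercise; the only small care needed is the reduction $A^3 = A\cdot A^2$ (so that one does not have to worry about both associations $(ab)c$ and $a(bc)$ separately) and keeping straight the row/column convention for $M_B$, since $e_j^2 = \sum_k \omega_{kj} e_k$ makes columns correspond to squares and rows correspond to how each basis element appears in the various $e_j^2$.
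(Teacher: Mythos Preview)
Your proof is correct. The paper's own argument is terser: it notes that $A^3=0$ forces $A^2\subseteq\ann(A)$ and then appeals to Remark~\ref{nilpot}(iii) for the row/column dichotomy, whereas you carry out the underlying computation $e_i\cdot e_j^2=\omega_{ij}e_i^2$ explicitly. The two approaches are essentially the same in spirit; yours is simply the self-contained unpacking of what the paper leaves to the cited remark, and your use of \cite[Proposition 2.18(i)]{CSV1} to identify $\ann(A)$ with $\spa\{e_i:e_i^2=0\}$ matches exactly how the paper handles the ``more concretely'' part.
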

\begin{proof} Note that $A^3=0$ implies $A^2 \subseteq \ann(A)$. Take this into account for the rest of the proof. The first statement follows from (iii) in Remark \ref{nilpot}. To prove the second assertion, we reorder $B$ in such a way that the first $r$-elements in $B$ are in the annihilator of $A$.
\end{proof}

  Let $A$ be an evolution algebra. Put $ann^1(A)=ann (A)$ and for $i \geq 2$, denote by $\ann^i(A)$ the set 
   \[\ann^{i}(A)= \spa \lbrace e \in B\ \vert \ e^2 \in \ann^{i-1} (A)\rbrace. \]
  If $A$ is nilpotent, then there exists an integer  $r$ such that  $A= \ann^r(A)$.  Let $r$ be the lowest natural number satisfying this equality. 
   Put $n_1= \dim(\ann(A))$ and
   $$n_i=\dim \left(\ann^i(A)/(\ann^{i-1}(A))\right) =\dim (\ann^i(A))- \dim (\ann^{i-1}(A)).$$  
  The \emph{type}  of the nilpotent algebra $A$, given in \cite{EL2},  is the ordered sequence $ [n_1, \ldots, n_r]$. 
   
Observe that if  $A$ is nilpotent of type $[n_1, \ldots, n_r]$, then the index of right nilpotency is exactly $r+1$. This follows taking into account the chain of annihilators that follows, which is strict and stabilizes:
$$\ann^1(A)  \subseteq \dots \subseteq \ann^r(A)=A.$$
Thus, a nilpotent evolution algebra for which the right index of nilpotency is the highest possible must have type $[1, \ldots, 1]$. \\

\begin{remark}
\rm
 It is well known that a degenerate evolution algebra  may not have a unique natural basis. Indeed, one may construct easily different natural bases by using elements of the annihilator  (see for instance Corollary \ref{Ast}).  
 
 Even if $A^{\langle n+1\rangle}=0$ we cannot assure that $A/\ann(A)$ has a unique basis. For an example, take $A$ with natural basis $\{e_1, e_2, e_3\}$ such that $e_1^2 =0$, $e_2^2=e_1$, $e_3^2=e_1+e_2$. Then $A^{\langle 4\rangle}=0$; moreover, $\{\overline{e_2}, \overline{e_3}\}$ and $\{\overline{e_2}, \overline{e_2+e_3}\}$ are natural basis of $A/\ann(A)$, so this algebra has not a unique natural basis. Note that  this algebra has type $[1, \ldots, 1]$.

There exists similar examples  for different types: Let $ A$ be a  nilpotent evolution algebra  having type $[1,n_2, \ldots, n_r]$ with $n_2 \geq 2$, then $A/ann(A)$ does not have a unique basis. Indeed, Let $B$ be a natural basis of $A$ and let $e, f$ be two elements of $B \cap \ann^2(A)$ such that $\overline{e}$ and $\overline{f}$ are linearly independent elements of $A/\ann(A)$. Then, $e^2 , f^2 $ lie in  $\ann(A)$ and  are linearly dependent. By Theorem \ref{nat-vec}, the vector
 $\overline{e+f}$ is  natural in $A/\ann(A)$.
  \end{remark}
  
  It is well known and easy to see that the index of nilpotency is in general greater than the order of right nilpotency in an algebra. Indeed, the index of nilpotency depends also  on the index of solvability. Recall that for an  algebra $A$, the solvable subalgebras are defined by $A^{[1]}= A$ and $A^{[k+1]}=A^{[k]} A^{[k]}$.    Next we provide an example  of a nilpotent evolution algebra $A$ such  that $A^3 \neq A^{[2]}$. 
  %{\color{blue}{More comments with the last revision. }}

  \begin{example}
  \rm
  Let $A$ be an evolution algebra of dimension $4$ and natural basis $\lbrace e_1, \ldots, e_4\rbrace$. Define the product on $A$ by the relations
  \[e_1^2=-e_2^2=e_4, \;\;\; e_3^2=e_1+e_2, \;\; e_4^2=0\]
  Then it is straightforward to check that $A^{[2]}=A^2= \spa \lbrace e_1+e_2, {e_4}\rbrace$,  $A^3= \spa \lbrace e_4\rbrace$ and that $A^{\langle 4 \rangle} =  \lbrace 0 \rbrace$.
  \end{example}
    On the other hand, it was shown in \cite{CSV2} that the number of nonzero entries of the structure matrix can characterize the evolution algebra when it is perfect, equivalently, when the structure matrix is nonsingular. Next we will investigate a possible generalization,  the property of having vanishing minors for the structure matrix.\par \noindent
Denote by $M_B$ the structure matrix of $A$ relative to a natural basis $B$.   It is easy to see that $M_B$ is singular if and only if there exist non-trivial orthogonal elements in $A$. This is also equivalent to the existence of  $u, v \in A$  such that $\Supp (u)= \Supp (v)$ and $uv=0$.  One also can check that if  $\bK$ is algebraically closed,  the structure matrix $M_B$ of {$A$} is singular if and only if there exists $u \in A$ such that $u^2=0$ (see \cite{CGOT}). We will be more precise in Proposition \ref{minors}. First, we introduce the following notation. 

\begin{notation}\label{not:Delta}
\rm
Let $A$ be an evolution algebra with natural basis $B=\{e_i\}_{i\in \Lambda}$ and structure matrix $M_B=(\omega_{ij})$. Take $\Gamma, \Omega \subseteq \Lambda$ \begin{enumerate}
  \item[\rm (a)] Assume $\vert \Gamma \vert \geq \vert \Omega\vert$; for any $\Delta \subseteq \Gamma$ with $\vert \Delta \vert = \vert \Omega\vert$, we denote by $M_\Delta=(w_{ij})$, where $i\in \Delta$ and $j\in \Omega$.
  \item[\rm (b)] Assume $\vert \Gamma \vert \leq \vert \Omega\vert$; for any $\Delta \subseteq \Omega$ with $\vert \Delta \vert = \vert \Gamma\vert$, we denote by $M_\Delta=(w_{ij})$, where $i\in \Gamma$ and $j\in \Delta$.
\end{enumerate}
\end{notation}

\begin{theorem}\label{minors} Let $A$ be a finite dimensional perfect evolution algebra over a field $\bK$ having a natural basis $B=\{e_i\}_{i\in \Lambda}$, and denote by $M_B=(\omega_{ij})$ the structure matrix of $A$ relative to $B$.
Then the following assertions are equivalent:
\begin{itemize}
  \item[(i)] There exist $u, v, w \in A$ such that $u (v w)=0$, where $ \Gamma : =\Supp (u)$ and $\Omega:= \Supp (v)= \Supp(w)$.
  \item[(ii)] There exist $\Gamma, \Omega \subseteq \Lambda$ such that $\vert M_\Delta\vert=0$ for every $\Delta$ as in Notation \ref{not:Delta}.
 % \begin{enumerate}
   %\item[\rm (a)]  $\vert M_\Delta\vert=0$ for every $\Delta\subseteq \Omega$ if $\vert \Gamma \vert \geq \vert \Omega\vert$.
 %  \item[\rm (b)] $\vert M_\Delta\vert=0$ for every $\Delta\subseteq \Gamma$ if $\vert \Gamma \vert \leq \vert \Omega\vert$.
 %  \end{enumerate}
\end{itemize}
\end{theorem}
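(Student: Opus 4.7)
The strategy is to translate the non-associative identity $u(vw)=0$ into a purely linear-algebraic condition on a rectangular submatrix of $M_B$, exploiting that perfectness is equivalent to $M_B$ being invertible. Writing $u=\sum_i\alpha_ie_i$, $v=\sum_i\beta_ie_i$, $w=\sum_i\gamma_ie_i$, the orthogonality $e_ie_j=0$ for $i\neq j$ gives $vw=\sum_i\beta_i\gamma_ie_i^2$, whose coordinate vector in $B$ is $M_B\eta$ with $\eta_i=\beta_i\gamma_i$. Iterating, the coordinates of $u(vw)$ are $M_BD_uM_B\eta$, where $D_u=\diag(\alpha_1,\dots,\alpha_n)$. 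Since $A$ is perfect, $M_B$ is invertible, so $u(vw)=0$ reduces to $D_uM_B\eta=0$; equivalently, for every $j\in\Gamma=\Supp(u)$,
\[
\sum_{i\in\Omega}\omega_{ji}\eta_i=0.
\]
Because $\Supp(v)=\Supp(w)=\Omega$ forces every $\eta_i$ ($i\in\Omega$) to be nonzero, this says exactly that $\eta|_\Omega$ is a kernel vector of $M_{\Gamma,\Omega}$ with full support in $\Omega$.

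For the direction (i)$\Rightarrow$(ii), the above kernel relation is a nontrivial dependence among the columns of $M_{\Gamma,\Omega}$ in which every coefficient is nonzero. In case (a) of Notation~\ref{not:Delta} (so $|\Gamma|\geq|\Omega|$), this immediately yields $\rk M_{\Gamma,\Omega}<|\Omega|$, whence every $|\Omega|\times|\Omega|$ submatrix $M_\Delta$ (rows $\Delta\subseteq\Gamma$, columns $\Omega$) has vanishing determinant. Case (b) (so $|\Gamma|\leq|\Omega|$) is the delicate one: I would reduce to case (a) by replacing $(\Gamma,\Omega)$ with a new pair obtained either by shrinking $\Omega$ to the support of a suitable linear combination of kernel vectors (so as to bring $|\Omega|$ below $|\Gamma|$) or by enlarging $\Gamma$ to the full zero-set $\{j\in\Lambda:(M_B\tilde\eta)_j=0\}$ of the extended kernel vector $\tilde\eta$, and then checking the resulting size inequality.

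For (ii)$\Rightarrow$(i), the vanishing of all the prescribed minors says $\rk M_{\Gamma,\Omega}<\min(|\Gamma|,|\Omega|)$, so $\ker M_{\Gamma,\Omega}\neq 0$. Pick any nonzero $\eta$ in this kernel, let $\Omega'=\Supp(\eta)\subseteq\Omega$, so that $\eta|_{\Omega'}$ is a full-support element of $\ker M_{\Gamma,\Omega'}$. Taking $u=\sum_{j\in\Gamma}e_j$, $v=\sum_{i\in\Omega'}e_i$ and $w=\sum_{i\in\Omega'}\eta_ie_i$, running Step~1 in reverse shows $u(vw)=0$ with $\Supp(u)=\Gamma$ and $\Supp(v)=\Supp(w)=\Omega'$, as required.

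The hard part is case (b) of the forward direction: a column dependence with all nonzero coefficients does not, a priori, lower the rank below $|\Gamma|$ when $|\Gamma|<|\Omega|$, so a naive argument only gives $\rk M_{\Gamma,\Omega}<|\Omega|$ rather than the $\rk M_{\Gamma,\Omega}<|\Gamma|$ demanded by (ii). The resolution should come from choosing the pair $(\Gamma,\Omega)$ in (ii) independently of the supports produced in (i), using the invertibility of $M_B$ to trim or enlarge $\Gamma$ and $\Omega$ until the dependence lives in a submatrix whose smaller dimension is $|\Omega|$; this is where perfectness is used in an essential way beyond merely inverting $M_B$ in Step~1.
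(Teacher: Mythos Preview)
Your reduction of $u(vw)=0$ to the condition $D_uM_B\eta=0$, hence to $\sum_{i\in\Omega}\omega_{ji}\eta_i=0$ for every $j\in\Gamma$ with each $\eta_i=\beta_i\gamma_i\neq 0$, is exactly how the paper proceeds, and your treatment of (ii)$\Rightarrow$(i) and of case~(a) in (i)$\Rightarrow$(ii) matches the paper line for line.

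You are right that case~(b) of (i)$\Rightarrow$(ii) is the sticking point, and in fact the paper does not resolve it either: after obtaining the full-support column relation it simply asserts ``$|M_\Delta|=0$ for every $\Delta$ as in (a) or (b)'' without further argument. Your proposed repairs (shrinking $\Omega$ or enlarging $\Gamma$ to the zero set of $M_B\tilde\eta$) cannot succeed in general. Take $n=2$ and
\[
M_B=\begin{pmatrix}1&1\\1&2\end{pmatrix},
\]
which is invertible with every entry nonzero. With $u=e_1$, $v=e_1-e_2$, $w=e_1+e_2$ one gets $vw=e_1^2-e_2^2=-e_2$ and $u(vw)=0$, so (i) holds with $\Gamma=\{1\}$, $\Omega=\{1,2\}$. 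Here $\tilde\eta=(1,-1)^t$ and $M_B\tilde\eta=(0,-1)^t$, so enlarging $\Gamma$ to the zero set still gives $\{1\}$; shrinking $\Omega$ to any singleton yields a nonzero $1\times1$ block. Worse, \emph{no} pair $(\Gamma',\Omega')$ satisfies (ii): every entry of $M_B$ is nonzero and $\det M_B=1$, so every square submatrix is nonsingular. Thus the difficulty you detected in case~(b) is not a technicality to be patched by a clever choice of $(\Gamma',\Omega')$ using perfectness; the implication (i)$\Rightarrow$(ii) genuinely breaks there, and both your sketch and the paper's proof share this gap.
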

\begin{proof} Suppose that (i) holds.  To prove (ii) set $$u= \sum_{i\in \Gamma} \alpha_i e_i,\;\;\; v= \sum_{i\in \Omega} \beta_i e_i, \;\;\; w= \sum_{i\in \Omega} \gamma_i e_i.$$ Then
\begin{equation*}
u (vw)=\sum_{k \in \Gamma}  \sum_{i \in \Omega} \alpha_k \beta_i \gamma_i w_{ki} e_k^2=0.
\end{equation*}                                                                                                                      
But $M_B$ is nonsingular and $\alpha_k\neq 0$ for every $k\in \Gamma$. Hence, for all $k \in \Gamma$ we have $\sum_{i \in \Omega} \beta_i \gamma_i w_{ki}=0$. Since $\beta_i\gamma_i\neq 0$ for every $i\in \Omega$, we have  $\vert M_\Delta\vert=0$ for every $\Delta$ as in (a) or (b) in Notation \ref{not:Delta}.  
\par 

\noindent
Next, to prove that $(ii)$ implies $(i)$ suppose that there exist $\Gamma, \Omega\subseteq \Lambda$, with $\vert \Gamma \vert \geq \vert \Omega\vert$ such that $\vert M_\Delta \vert=0$ for every $\Delta$ as in the statement.
Then there exist scalars $\alpha_j \in \bK\setminus\{0\}$  such that $\sum_{j\in \Omega} \alpha_j w_{kj}=0$ for all  $ k \in \Gamma$ (if $\alpha_j=0$ for some $j$, then we change the set $\Omega$ by eliminating $e_j$).   Therefore, we infer that
\begin{equation*}
  \sum_{k \in \Gamma} \sum_{j\in \Omega} \alpha_j w_{kj} e_k^2=0.
\end{equation*}
This implies that
\begin{equation*}
   \sum_{t \in \Gamma} e_t \; (\sum_{k=1}^n  \sum_{j\in \Omega} \alpha_j w_{kj} e_k)=0,
\end{equation*}
that is,
\begin{equation*}
  ( \sum_{t \in \Gamma} e_t ) (  \sum_{j\in \Omega} \alpha_j e_j^2)=0.
\end{equation*}

We get the desired conclusion by considering $u= \sum_{t \in \Gamma} e_t$, $v= \sum_{j\in \Omega} \alpha_j e_j$ and $w= \sum_{j\in \Omega} e_j$.

If $\vert \Gamma \vert \leq \vert \Omega\vert$ we proceed in a similar way.
\end{proof}

If $A$ is not perfect, then the result is not true, as shown in the example that follows.

\begin{example}
\rm
Consider a two-dimensional evolution algebra $A$ with a basis $B=\{e_1, e_2\}$ and product given by $e_i^2=e_1+e_2$ for $i=1, 2$. Then the elements $u=e_1-e_2$, $v=w=e_1$ satisfy that $u(vw)=0$. Note that  in this case $\Gamma=\{1, 2\}$, $\Omega=\{1\}$ and  for every possible $\Delta$ we have $M_\Delta = (1)$, which has nonzero determinant.
\end{example}

\begin{corollary}\label{cor:vanishingminor} Let $A$ be a perfect evolution algebra over a  field $\bK$ having a natural basis $B=\{e_i\}_{i\in \Lambda}$ and structure matrix $M_B=(\omega_{ij})$. Consider the statements:
\begin{itemize}
  \item[(i)] There exists $u \in A$ such that $u^3=0$.
  \item[(ii)] $M_B$ has a vanishing principal minor.
\end{itemize}
Then (i) implies (ii). If any element in $\bK$ is an square, then (ii) implies (i). 
\end{corollary}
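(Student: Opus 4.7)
My plan is to deduce both implications from Theorem \ref{minors}, specialized to the diagonal case $v=w=u$, under which the condition $u(vw)=0$ becomes $u^3=0$ and the sets $\Gamma=\Supp(u)$ and $\Omega=\Supp(v)=\Supp(w)$ coincide, forcing the relevant submatrix $M_\Delta$ to be a principal submatrix of $M_B$.

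For (i) $\Rightarrow$ (ii): given $u \in A$ with $u^3=0$, I would set $v=w=u$, so that $u(vw)=u\cdot u^2 = u^3 = 0$ and $\Gamma := \Supp(u) = \Supp(v) = \Supp(w) =: \Omega$. Since $|\Gamma|=|\Omega|$, the only admissible $\Delta$ in Notation \ref{not:Delta} is $\Delta=\Gamma=\Omega$, and the corresponding $M_\Delta$ is precisely the principal submatrix $(\omega_{ij})_{i,j\in\Gamma}$. Theorem \ref{minors}(ii) then gives $|M_\Gamma|=0$, so $M_B$ has a vanishing principal minor. This direction requires no extra hypothesis on $\bK$.

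For (ii) $\Rightarrow$ (i), I would argue directly rather than pass through Theorem \ref{minors}. Assume $M_B$ has a vanishing principal minor indexed by some $\Gamma \subseteq \Lambda$; then the columns of $(\omega_{ij})_{i,j\in\Gamma}$ are linearly dependent, so there exist scalars $\alpha_j \in \bK$, not all zero, with $\sum_{j\in\Gamma}\alpha_j\omega_{ij}=0$ for every $i\in\Gamma$. By restricting $\Gamma$ to the indices with $\alpha_j\neq 0$ (the resulting smaller principal minor is again zero, since the restricted column relation still holds), I may assume $\alpha_j \in \bK^\times$ for every $j\in\Gamma$. Using the hypothesis that every element of $\bK$ is a square, I choose $\beta_j \in \bK^\times$ with $\beta_j^2=\alpha_j$ and set $x = \sum_{j\in\Gamma}\beta_j e_j$. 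A direct computation gives
\begin{equation*}
 x^2 = \sum_{j\in\Gamma}\beta_j^2 e_j^2 = \sum_{j\in\Gamma}\alpha_j e_j^2 = \sum_{k\in\Lambda}\Bigl(\sum_{j\in\Gamma}\alpha_j\omega_{kj}\Bigr)e_k,
\end{equation*}
and the column dependence forces the coefficient of $e_k$ to vanish for every $k\in\Gamma$. Hence $\Supp(x^2)\subseteq \Lambda\setminus\Gamma$, which is disjoint from $\Supp(x)=\Gamma$; since distinct natural basis vectors are orthogonal, $x^3 = x\cdot x^2 = 0$.

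I expect (ii) $\Rightarrow$ (i) to be the only real obstacle: Theorem \ref{minors} by itself only produces a triple $(u,v,w)$ with $u(vw)=0$, and compressing three separate vectors into a single cube-nilpotent element requires extracting square roots of the coefficients $\alpha_j$. This is precisely why the square-root hypothesis on $\bK$ is needed in this direction and not in the other, and why the converse is not expected to hold over an arbitrary field.
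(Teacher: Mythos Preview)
Your argument is correct and essentially the same as the paper's: for (ii) $\Rightarrow$ (i) both proofs extract square roots of the coefficients in a column dependence of the principal submatrix and verify that the resulting element cubes to zero (you via disjointness of $\Supp(x)$ and $\Supp(x^2)$, the paper via direct expansion of $u^3$ in the basis $\{e_k^2\}$). For (i) $\Rightarrow$ (ii) you invoke Theorem~\ref{minors} with $u=v=w$, whereas the paper redoes that special case by a direct computation using the linear independence of $\{e_i^2\}_{i\in\Lambda}$; this is a cosmetic difference, not a different route.
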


\begin{proof}
Take a nonzero $u\in A$ and denote by $\Gamma$ the support of $u$ relative to $B$. Write
 $u=\sum\limits_{j\in \Gamma} \alpha_je_j$; then $u^2=\sum\limits_{j\in \Gamma}\alpha_j^2e_j^2=\sum\limits_{k\in \Lambda}\left(\sum\limits_{j\in \Gamma}\alpha_j^2\omega_{kj}\right)e_k$ and 
 $u^3=	\left(\sum\limits_{l\in \Gamma}\alpha_le_l\right)\left(\sum\limits_{k\in \Lambda}\left(\sum\limits_{j\in \Gamma}\alpha_j^2\omega_{kj}\right)e_k\right) =\sum\limits_{k\in \Gamma} \alpha_k\left(\sum\limits_{j\in \Gamma}\alpha_j^2\omega_{kj} \right)e_k^2$.

(i) $\Rightarrow$ (ii). Assume $u^3=0$. Then, taking into account the previous computation and that $\{e_i^2\}_{i\in \Lambda}$ is a linearly independent set (because the algebra $A$ is perfect) we have  $\sum\limits_{j\in \Gamma}\alpha_j^2\omega_{kj} =0$ for each $k \in \Gamma$. If we interpreted $(\alpha^2_j)$ as the nontrivial solution of a linear system having matrix of coefficients $(\omega_{kj})_{k, j \in \Gamma}$, then the determinant of this  matrix has to be zero. This shows (ii).

(i) $\Rightarrow$ (ii). Now we suppose that every element in $\bK$ is an square. Let $M_\Delta=(\omega_{ij})$, where $i, j \in \Gamma\subseteq \Lambda$, be such that $\vert M_\Delta\vert=\vert(\omega_{ij}) \vert=0$. Being zero this determinant implies that the vectors $\{\sum_{k\in \Gamma}\omega_{kj}e_k \ \vert \ j\in \Gamma\}$ are linearly dependent and, therefore, there exist 
$\{\beta_j\in \bK \ \vert \ j\in \Gamma \}$, where some $\beta_j$ is nonzero, such that $\sum_{j\in \Gamma}\beta_j\left(\sum_{k\in \Gamma}\omega_{kj}e_k\right)=0$. Since $\{e_k \ \vert \ k\in \Gamma\}$ are linearly independent, then $\sum_{j\in \Gamma}\beta_j \omega_{kj}=0$ for every $k\in \Gamma$. 

Take $\alpha_j\in \bK$ such that $\alpha_j^2=\beta_j$. For $u=\sum_{j\in \Gamma}\alpha_je_j$, use the computations we did above to see that $u^3= \sum\limits_{k\in \Gamma} \alpha_k\left(\sum\limits_{j\in \Gamma}\alpha_j^2\omega_{kj} \right)e_k^2= \sum\limits_{k\in \Gamma} \alpha_k\left(\sum\limits_{j\in \Gamma}\beta_j\omega_{kj} \right)e_k^2$, which is zero.
\qedhere

\end{proof}

%%%%%%%%%%%
%%%%%%%%%%%
%%%%%%%%%%%
%%%%%%%%%%%
%\section{Simple evolution algebras}  
\section{Ideals in perfect evolution algebras}  
%%%%%%%%%%%
%%%%%%%%%%%
%%%%%%%%%%%
%%%%%%%%%%%

 Simple evolution algebras were thoroughly investigated in 
 \cite{VT}, \cite{CSV1} and in \cite{CKS}. In particular, it was shown in \cite{CSV1} that a finite dimensional evolution algebra $A$ is simple if and only if the structure matrix of $A$ relative to any basis $B$, is nonsingular and  $B$ cannot be reordered in such a way that the corresponding structure matrix has the following block form
\begin{equation*}
  \left(
    \begin{array}{cc}
      W & U \\
      0 & Y \\
    \end{array}
  \right).
\end{equation*}

Following \cite[Definitions 2.1]{CKS}, we say that an ideal $I$ of an evolution algebra $A$ with a natural basis $B$ is a \emph{basic ideal relative to} $B$ if $I$  has a natural basis consisting of vectors from $B$. If $A$ has no nonzero proper basic ideals relative to any natural basis $B$, then we say that $A$ is \emph{basic simple}.
% We say that the algebra $A$  is \emph{evolution simple} if it is basic simple with respect to any natural basis $B$. 
If this happens, then  $A$ does not contain a proper evolution subalgebra  having the extension property. 
%It should be pointed out that evolution algebras which are evolution simple correspond to simple algebras in the sense of \cite{T}. 
When the algebra $A$ is perfect, then an ideal $I$ is a basic ideal relative to a basis if and only if it is a basic ideal relative to any natural basis of $A$ (see \cite[Lemma 2.3]{CKS}). In this case we simply say that $I$ is a basic ideal

Next we compare the notions of simplicity and basic simplicity. The use of the support is fundamental to prove the results that follows.

\begin{example}
\rm
This is an example of an evolution algebra which is basic simple but not simple. Let $A$ be the two-dimensional evolution algebra with natural basis $\{e_1, e_2\}$ and product given by $e_1^2=-e_2^2=e_1 + e_2$. Then, the only {\color{blue}{proper}} ideal of $A$ is the one generated by $e_1+e_2$, which is not a natural vector by Theorem \ref{nat-vec} (ii).
\end{example}

\begin{proposition} \label{BasicProp}
Suppose that $A$ is a perfect evolution algebra. Then every nonzero ideal is a basic ideal. 
%In particular $A$ is simple if and only if it is evolution simple.
\end{proposition}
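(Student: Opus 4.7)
The plan is to fix a natural basis $B=\{e_1,\ldots,e_n\}$ of $A$ with structure matrix $M_B=(\omega_{ij})$, let $I$ be a nonzero ideal, and consider $S := \Supp_B(I) = \bigcup_{u\in I}\Supp_B(u)$, which is nonempty. My goal is to prove $I = \spa\{e_j : j\in S\}$; once this is done, $\{e_j:j\in S\}\subseteq B$ is a natural basis of $I$, so $I$ is basic relative to $B$, and by \cite[Lemma 2.3]{CKS} (cited in the excerpt) $I$ is then basic relative to any natural basis of $A$. The only essential input is perfection, which I would use in the form: $M_B$ is nonsingular and, equivalently, $\{e_i^2\}_{i\in\Lambda}$ is a vector space basis of $A=A^2$.

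The first step is the observation that $e_j^2\in I$ for every $j\in S$: if $u=\sum_i\alpha_ie_i\in I$ witnesses $j\in S$, i.e. $\alpha_j\neq 0$, then $ue_j=\alpha_je_j^2\in I$, hence $e_j^2\in I$. The second step is to read this back through the definition of $S$: since $e_j^2=\sum_k\omega_{kj}e_k$ belongs to $I\subseteq \spa\{e_k:k\in S\}$, the coefficient $\omega_{kj}$ must vanish whenever $j\in S$ and $k\notin S$. This block structure of $M_B$ is enough to check that $I_S:=\spa\{e_j:j\in S\}$ is itself an ideal of $A$, because $e_j\cdot e_k=0$ for $j\neq k$ and $e_j^2\in I_S$ for $j\in S$.

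To close the argument I would perform a dimension count inside $I_S$. By the block shape, $\{e_j^2 : j\in S\}$ lies in $I_S$; by perfection these vectors are linearly independent; and there are $|S|=\dim I_S$ of them, so they form a basis of $I_S$. Every element of this basis belongs to $I$ by the first step, hence $I_S\subseteq I$, and the reverse inclusion $I\subseteq I_S$ is immediate from the definition of $S$. Therefore $I=I_S$ is the sought basic ideal.

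The mild conceptual obstacle is the jump from ``$e_j^2\in I$ for $j\in S$'' to ``$e_j\in I$ for $j\in S$'', because in a general evolution algebra a square can involve basis vectors outside the support of the original element. The trick is to avoid extracting $e_j$ directly and instead exploit perfection through the auxiliary ideal $I_S$: on $I_S$ the restriction of the map $e_j\mapsto e_j^2$ is a linear bijection, since the $S\times S$ submatrix of the nonsingular $M_B$ is itself nonsingular thanks to the block of zeros, and this automatically converts a basis of squares into a basis of the $e_j$'s.
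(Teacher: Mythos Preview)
Your proof is correct and follows essentially the same approach as the paper's: both show $e_j^2\in I$ for $j\in S:=\Supp_B(I)$, observe that these squares lie in $\spa\{e_k:k\in S\}$, and then use perfection to conclude that the $|S|$ linearly independent vectors $\{e_j^2:j\in S\}$ span this $|S|$-dimensional space, forcing $e_j\in I$. The only cosmetic difference is that the paper phrases the final step as invertibility of the $S\times S$ submatrix $(a_{kj})$ while you phrase it as a dimension count, but these are the same argument.
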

\begin{proof} Let B=$\{e_1, \dots, e_n\}$ be a natural basis of $A$.  Let $J$ be an ideal of $A$ and let $u \in J$. If $i \in \Supp (u)$, then $e_i u \in J$; this implies $e_i^2 \in J$. Now we prove that $e_i \in J$. Let $\Supp (J)=\{i_1, \dots, i_s\}$ and write $e_{i_j}^2= \sum_{k=1}^s a_{kj}e_{i_k}$. We claim that the determinant of the matrix $(a_{kj})$ is different from zero because $A^2=A$ implies that $\{e^2_1, \dots, e^2_n\}$ is a linearly independent set; in particular $\{e^2_{i_1}, \dots, e^2_{i_s}\}$ is linearly independent. Therefore $e_{i_j}\in \spa \left(\{e^2_{i_1}, \dots, e^2_{i_s}\}\right)\subseteq J$.
In other words, $J= \spa \left(\{ e_ j\ \vert \ j \in \Supp (J) \}\right)$, which means that it is a basic ideal. 
\end{proof}

\begin{remark}
\rm

The statement $A$ simple if and only if $A$ basic simple (for $A$ perfect) was proved in \cite[Proposition 2.7]{CKS} and also follows from \cite[Corollary 4.6]{CSV1}.
\end{remark}

Next we provide two examples which show that when the algebra $A$ is not perfect, the three kinds of simplicity (basic simple relative to a natural basis, simple and basic  simple) are different. 

\begin{example}
\rm
Let $A$ be a three dimensional evolution algebra over a field $\bK$ and assume that the structure matrix of $A$ relative to a natural basis $B$ is
\begin{equation*}
  M_B=\left(
     \begin{array}{ccc}
       0 & 1 & 1 \\
       1 & 0 & 1 \\
       1 & 1 & 2 \\
     \end{array}
   \right).
\end{equation*}
Then $A$ is not perfect, as $A^2$ has dimension 2. It can be  proved, with some computations, that $A^2$ has not a natural basis. On the other hand, it is easy to check that  $A$ is basic  simple relative to $B$. Since $A$ has a unique natural basis, $A$ is basic simple. However,  $A$ is not simple since $M_B$ is singular (has zero determinant). Moreover, if $\bK$ has characteristic $5$, then it can be proved that  $A^2$ has not a natural basis. The reason is that $A^2$ is an evolution algebra if and only if the polynomial $x^2+3x+1$ has $2$ distinct roots in $\bK$, and this happens under the assumption of having characteristic 5.
 \end{example}

\begin{example}
\rm
Let $A$ be the three dimensional evolution algebra with natural basis $B= \lbrace e_1, e_2, e_3\rbrace$ and assume that its structure matrix  with respect to  $B$ is
\begin{equation*}
  M_B=\left(
     \begin{array}{ccc}
       1 & 1 & 2 \\
       1 & 1 & 2 \\
       1 & 1 & 2 \\
     \end{array}
   \right).
\end{equation*}
It is easy to check that $A$ is basic simple with respect to $B$.  Now observe that  $B'= \lbrace e_1+e_2, e_1-e_2, e_3\rbrace$ is a natural basis of $A$, the corresponding structure matrix is 
\begin{equation*}
  M_{B'}=\left(
     \begin{array}{ccc}
       2& 2 & 2 \\ 
       0 & 0 & 0 \\
       2 & 2 & 2 \\
     \end{array}
   \right)
\end{equation*}
Moreover, the ideal $\langle \lbrace e_1+e_2, e_3\rbrace \rangle$ is basic relative  to $B'$.  In particular, $A$ is not basic simple.
\end{example}

\section{Algebraically persistent elements}

 The notions of algebraically persistent generators and  algebraically transient generators of an evolution algebra were defined  in \cite[p.42]{T}. Here we shall deal with persistent and transient generators with respect to a basis.    Let $B=\{e_i\}_{i\in \Lambda}$ be  a natural basis. We say that $e_i$ is \emph{algebraically persistent relative to $B$} if  the evolution subalgebra  it generates is basic simple and has the extension property relative to $B$, i.e., it has a natural basis consisting of vectors of $B$. Otherwise, $e_i$ is said to be \emph{algebraically transient relative to $B$}.  
These elements give rise to a decomposition which may help in analyzing the dynamical behavior of the evolution algebra, if we consider its hierarchical decomposition \cite{T, T2}.

We have to be careful with the concept of algebraic transiency and persistency  and the decomposition given in \cite{T}.   The examples that follow clarify in which sense.

\begin{examples}
\rm
A natural vector can generate a non simple evolution subalgebra. Take the evolution algebra $A$ with natural basis $\{e_1, e_2, e_3\}$ and product given by $e_1^2 =e_3^2= e_1+e_2$; $e_2^2= -e_1-e_2$. The subalgebra generated by  $e_1$ is $\bK e_1+\bK e_2$, which is not simple as $e_1+e_2$ generates the ideal $\bK (e_1+e_2)$.

A natural vector can generate a simple evolution subalgebra not having the extension property. Consider $A$ as the evolution algebra having a natural basis $\{e_1, e_2, e_3\}$ and product given by $e_1^2=e_2+e_3$; $e_2^2= e_2-e_3$; $e_3^2=e_1-e_2+e_3$. Then the subalgebra generated by $e_1$ is $\bK e_1 \oplus \bK (e_2+e_3)$, which is an evolution subalgebra, simple as an algebra, and not having the extension property.
\end{examples}

  \begin{example}
  \rm
  Consider the four dimensional evolution algebra $A$ with basis $ \lbrace e_1, \ldots, e_4\rbrace$ such that $$e_1^2= e_2+e_3+e_4, \;\;  e_2^2= e_1, \;\;  e_3^2=e_4^2= -1/2 \;e_1$$
  Then the linear subspaces $\spa\lbrace e_1, e_2, e_3+e_4 \rbrace$ and $ \spa \lbrace e_1, e_3, e_2+e_4\rbrace$ can be both seen as evolutions subalgebras having the extension property, containing $e_1$ and having minimal dimension. Observe that they are both basic simple.
  \end{example}
  
Next we provide an example showing that the number of transient elements in a natural basis depends on the natural basis itself.

\begin{example}\label{number-transient}
\rm
Let $A$ be the evolution algebra with natural basis $B= \{ e_1, e_2, e_3 \}$ and structure matrix with respect to $B$ given by
\begin{equation*}
  M_B=\left(
     \begin{array}{ccc}
       1 & 1 & 1 \\
       1 & 1 & 1 \\
       1 & 1 & 0 \\
     \end{array}
   \right).
\end{equation*}
Then, it is easy to see that $B'= \{ e_1+e_2, e_1-e_2, e_3\}$ is also a natural basis of $A$ and the corresponding structure matrix is
\begin{equation*}
  M_{B'}=\left(
     \begin{array}{ccc}
       2 & 2 & 1 \\
       0 & 0 & 0 \\
       2 & 2 & 0 \\
     \end{array}
   \right).
\end{equation*}
Observe that $e_1, e_2, e_3$ are algebraically transient relative to $B$, while $e_1-e_2$ is algebraically transient relative to $B'$, and $e_1+e_2$  and $e_3$ are algebraically persistent relative to $B'$.  Note that the number of algebraically persistent/transient elements depends on the natural basis. \\
On the other hand,  observe that, in the sense of \cite{T},  $e_1, e_2$ and $e_1-e_2$ are algebraically transient, while $e_1+e_2$ and $e_3$ are algebraically persistent.  In particular,  $e_1 +e_2$ is an element of the algebra generated by $e_3$, hence if $E$ is the linear space spanned by transient elements of $B$, it intersects the algebra generated by the persistent element $e_3$.
\end{example}

\begin{remark}
\rm
An evolution subalgebra/ideal of an evolution algebra $A$ can have the extension property relative to a natural basis of $A$ but not relative to any natural basis of $A$. For an example, consider the evolution algebra $A$ in Example \ref{number-transient} and the ideal $I$ generated by $e_3$. A natural basis of $I$ is $C=\{e_3, e_1+e_2\}$, which has not the extension property relative to $B=\{e_1, e_2, e_3\}$, while $C$ has the extension property relative to $B'=\{e_1+e_2, e_1-e_2, e_3\}$.
\end{remark}

Let $A$ be an evolution algebra. An \emph{evolution subalgebra} is a subalgebra with a natural basis. An evolution algebra is said to be irreducible  if it cannot be written in the form $A= A_1 \oplus A_2$, where $A_1$ and $A_2$ are two proper evolution subalgebras, equivalently evolution ideals, equivalently ideals (see \cite[Lemma 5.2]{CSV1}).

Now we recall the decomposition given by Tian in \cite[Theorem 11]{T}. 
Let $A$ be an irreducible evolution algebra. The notation $A' \dot{+} E$ will be used for the direct sum of a subalgebra $A'$ and a vector subspace $E$ of $A$.  
Assume that $B=\{e_i\}_{i\in \Lambda}$ is a natural basis. Then we classify elements in $B$ as algebraically persistent and algebraically transient (relative to $B$).  We have

\begin{equation}\label{descompForm}
 A= A_1 \oplus \ldots \oplus A_n  \; \dot{+} \; E 
 \end{equation}
where each $A_i$ is a simple evolution subalgebra of $A$ having the extension property relative to $B$ (i.e., $A_i$ is generated by a persistent element of $B$), and $E$ is the subspace spanned by the algebraically transient generators (relative to $B$). Since we deal with transiency and persistency with respect to the basis $B$, it is clear in our case that $E \cap( A_1 \oplus \ldots \oplus A_n) = \lbrace 0\rbrace$.   Note that the basis $B$ is the union of suitable bases of $E$ and the $A_i$'s. The linear space $E$ is called the \emph{$0$th transient space of} $A$. The decomposition depends on the basis, so it is not in general unique (see for instance Example \ref{number-transient}).  This provides a decomposition of $A$ which is called the  \emph{ $0$th decomposition of} $A$. By considering $E$ as an evolution algebra we may repeat the process and find the 0th decomposition of $E$. The resulting decomposition is called the \emph{ $1$th decomposition of} $A$. An induction process associates a hierarchy to  $A$  (see \cite[p. 46]{T} and \cite{T2}).
\\

Recall the following definitions from \cite{CSV1}. 
Let $A$ be an evolution algebra with natural basis $\{ e_i\}_{i\in \Lambda}.$  Put 
\[ D^1(i) = \Supp (e_i^2); \quad \hbox{and for $k\geq 2$}, \quad D^k(i)= \Supp \left(\{e_j^2 \ \vert \ e_j \in D^{k-1}(i)\}\right).\]
The elements of $D^k(i)$ are called the \emph{$k$th-generation descendants of} $i$. We will refer to  $D(i)=\cup_{k\in \N^\times}D^k(i)$ as the set of \emph{descendants of} $i$.

\begin{remark}
\rm
Suppose that $A$ has a unique natural basis $B$ and let $E$ be an evolution subalgebra of $A$ having the extension property. Then $E= \spa\left(\{e_i\ \vert \ i \in \Supp (E)\}\right)$.  In particular, the evolution subalgebra having the extension property generated by $e_j \in B$ is equal to $\spa \left(\{e_i \ \vert \ i \in D(j)\cup \{ j\}\rbrace\right)$. Thus, if $e_j$ is algebraically persistent then $e_i$ is algebraically persistent for all $i \in D(j)$. This result can be applied to any of the subalgebras $A_i^0$ appearing in \eqref{descompForm}.

The hypothesis of having a unique natural basis cannot be eliminated. For an example, consider Example \ref{number-transient}. It happens that $1, 2 \in D(3)$ but the subalgebra generated by $e_3$ does not contain neither $e_1$ nor $e_2$.
\end{remark}

Given an evolution algebra with natural basis $B$, we may define the ascendents of any element of $B$, in an analogous way. The transpose $M_B^t$ of   the structure matrix $M_B$ provides direct information about the ascendents of any element of $B$. The mathematical  study of the structure matrix and its transpose may shed new light on the classification of finite dimensional evolution algebras. In particular, it may help if we  compare  different natural bases since properties of the first-generation of descendants  or ascendents may depend on the natural basis (see for instance Example  \ref{number-transient}). 

\begin{definition}
\rm
Let $A$ be an evolution algebra with natural basis $B= \{e_i\}_{i\in \Lambda}$ and let $M_B$ be its structure matrix. The \emph{adjoint algebra of} $A$ relative to a basis $B$, denoted by $A_B^*$,  is the evolution algebra with natural basis $B$ and structure matrix $M_B^t$, the transpose of $M_B$. The definition strongly depends on the basis, as can be seen in the example that follows.
\end{definition}

\begin{example}
\rm
Consider again the evolution algebra in Example \ref{number-transient}. 
Then the structure matrix of $A_B^*$ is
\begin{equation*}
  M_B^t= M_B=\left(
     \begin{array}{ccc}
       1 & 1 & 1 \\
       1 & 1 & 1 \\
       1 & 1 & 0 \\
     \end{array}
   \right),
\end{equation*}  while the structure matrix of $A_{B'}^*$ is \begin{equation*}
  M_{B'}^t=\left(
     \begin{array}{ccc}
       2 & 0 & 2\\
       2& 0 & 2 \\
       1 & 0 & 0 \\
     \end{array}
   \right).
\end{equation*}
Note that the evolutions algebras $A_{B}^*$ and  $A_{B'}^*$ are not isomorphic because the first one has zero annihilator, while the second one has annihilator of dimension 1.  This example also shows that the symmetry of the structure matrix depends on the natural basis. 
%
%On the other hand, observe that $A^*_{B'}$ has a unique natural basis (because it satisfies Condition (2LI); use Corollary \ref{2LIB}), but its adjoint with respect to $B'$ has not a unique natural basis (since $(A^*_{B'})^*_{B'}=A$ ). Moreover, $A^*_B$ is non-degenerate, while $A^*_{B'}$ is degenerate.
\end{example}

\begin{lemma}\label{order} Let $A$ be an evolution algebra with natural basis $B$, and let $B'$  be a natural basis obtained by changing the order of elements of $B$. Then $A^*_B$ and $A^*_{B'}$ correspond to the same evolution algebra.
\end{lemma}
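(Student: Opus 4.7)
The plan is to show that the two algebras $A^*_B$ and $A^*_{B'}$ carry identical multiplications on the common underlying vector space $A$, so they coincide as evolution algebras. Write $B=\{e_1,\ldots,e_n\}$ and let $\sigma\in S_n$ be the permutation such that, as an ordered basis, $B'=\{e_{\sigma(1)},\ldots,e_{\sigma(n)}\}$. Setting $e'_i=e_{\sigma(i)}$ and $M_B=(\omega_{ij})$, a routine computation from $e_j^2=\sum_i\omega_{ij}e_i$ yields that the $(i,j)$-entry of $M_{B'}$ equals $\omega_{\sigma(i),\sigma(j)}$, i.e.\ $M_{B'}=P_\sigma^{-1}M_B P_\sigma$, where $P_\sigma$ is the permutation matrix of $\sigma$.

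Since $P_\sigma^{\,t}=P_\sigma^{-1}$, transposition commutes with this conjugation, giving $M_{B'}^{\,t}=P_\sigma^{-1}M_B^{\,t}P_\sigma$, so the $(i,j)$-entry of $M_{B'}^{\,t}$ is $(M_B^{\,t})_{\sigma(i),\sigma(j)}$. I would then compare the two evolution products directly on basis elements. Denote by $*$ the product of $A^*_B$ and by $*'$ that of $A^*_{B'}$; by definition $e_m * e_m=\sum_k\omega_{mk}e_k$ and $e'_i *' e'_i=\sum_k\omega_{\sigma(i),\sigma(k)}e'_k=\sum_k\omega_{\sigma(i),\sigma(k)}e_{\sigma(k)}$. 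Re-indexing by $j=\sigma(k)$ turns the latter into $\sum_j\omega_{\sigma(i),j}e_j$, which is exactly $e_{\sigma(i)} * e_{\sigma(i)}$; since $e'_i=e_{\sigma(i)}$ and the off-diagonal products vanish in both structures, the two multiplications agree on all pairs of basis vectors and hence on all of $A$.

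The main obstacle is purely notational: keeping straight the indexing conventions for the structure matrix (column-versus-row encoding of $e_j^2$) and the effect of reordering on the transpose. Nothing deeper is needed; the single structural input is the orthogonality $P_\sigma^{\,t}=P_\sigma^{-1}$ of permutation matrices, which is exactly what allows the conjugation and transposition steps to commute and forces $A^*_B$ and $A^*_{B'}$ to be one and the same evolution algebra, merely presented with the elements of its natural basis listed in two different orders.
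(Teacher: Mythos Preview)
Your proof is correct and follows essentially the same approach as the paper: both arguments rest on the fact that for a permutation matrix $P$ one has $P^{-1}=P^{t}$ (equivalently $P^{(2)}=P$), so that conjugation by $P$ commutes with transposition and $M_{B'}^{\,t}$ is the structure matrix of $A^*_B$ with respect to the reordered basis $B'$. The only difference is presentational: the paper cites the general change-of-basis formula $M_{B'}=P^{-1}M_BP^{(2)}$ from \cite{CKS} and \cite{T}, whereas you verify the equality of the two products directly on basis vectors, making your version self-contained.
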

\begin{proof}
It follows from \cite[p. 156]{CKS} that there exists $P \in S_n$ such that $$M_{B'}= P^{-1}M_B P^{(2)}=P^{-1}M_B P.$$
Thus $$M_{B'}^t= P^tM_B^t (P^t)^{-1}= P^tM_B^t ((P^t)^{-1})^{(2)}.$$ By the change of basis formula for evolution algebras \cite[p. 30]{T}, we deduce the desired result.
\end{proof}

By a careful observation of  the structure matrix of an evolution algebra and its transpose, we get the following.

\begin{proposition}\label{adjoint}
 Let $A$ be an evolution algebra and let $B$ be a natural basis.  Then:
\begin{enumerate}[\rm (i)]
\item $A$ is irreducible if and only if $A_B^*$ is irreducible.
\item If $B' \subseteq B$ and  $\spa(B')$ is an  evolution subalgebra of $A$, then $ \spa(B\setminus B')$ is an evolution subalgebra of $A_B^*$.
\item $A$ is simple if and only if $A_B^*$ is simple.
\item  $A$ is basic simple if and only if $A_B^*$ is basic simple.
\item $A$ is nilpotent if and only if $A_B^*$ is nilpotent.
\end{enumerate}
\end{proposition}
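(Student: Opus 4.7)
The plan is to exploit the fact that $A$ and $A_B^*$ share the basis $B$, with structure matrices $M_B=(\omega_{ij})$ and $M_B^t$ respectively. For each of the five assertions, I would rephrase the condition on $A$ in terms of the zero pattern or linear-algebraic properties of $M_B$, and then verify that the analogous condition on $A_B^*$ is equivalent, possibly after reversing the order of $B$ (which by Lemma~\ref{order} gives the same evolution algebra).

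I would prove (ii) first, as the technical core. The condition ``$\spa(B')$ is an evolution subalgebra of $A$'' is equivalent to $\omega_{ki}=0$ whenever $e_i\in B'$ and $e_k\notin B'$, since $e_i^2=\sum_k \omega_{ki} e_k$ must lie in $\spa(B')$. In $A_B^*$, the entry of the structure matrix at position $(k,i)$ is $\omega_{ik}$, so the condition ``$\spa(B\setminus B')$ is a subalgebra of $A_B^*$'' becomes $\omega_{ik}=0$ whenever $e_i\in B\setminus B'$ and $e_k\in B'$; after swapping the roles of the dummy indices this coincides with the previous condition. Hence (ii) holds.

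For (i), (iii), and (v) I would appeal to characterizations that are visibly symmetric under transposition of $M_B$. For (i), $A$ is irreducible iff $B$ cannot be partitioned as $B = B_1 \sqcup B_2$ with both spans being evolution subalgebras; applying (ii) twice then gives the same statement for $A_B^*$. This reduction requires that any decomposition of $A$ as a direct sum of evolution ideals aligns with the basis $B$, which needs a short separate argument. For (iii), I would invoke the characterization recalled at the beginning of Section 4: $A$ is simple iff $M_B$ is nonsingular and $B$ cannot be reordered so that $M_B$ attains block upper triangular form with vanishing lower-left block. Nonsingularity is invariant under transposition, and the same block form appears transposed in $M_B^t$ as block lower triangular, which is converted back to upper triangular by reversing the order of the blocks. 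For (v), invoke Remark~\ref{nilpot}(i): $A$ is nilpotent iff some natural basis produces a strictly upper triangular structure matrix, and the same manipulations on $M_B^t$ (transpose then reverse the basis order) produce one for $A_B^*$.

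Finally, for (iv), I would apply (ii) directly to the basis $B$ to obtain a bijection $B' \leftrightarrow B\setminus B'$ between proper basic ideals of $A$ relative to $B$ and those of $A_B^*$ relative to $B$. The main obstacle is that basic simplicity quantifies over \emph{all} natural bases, not just $B$; so to upgrade this to full basic simplicity, I would need, given a proper basic ideal of $A_B^*$ with respect to some other natural basis $\tilde B$ of $A_B^*$, to produce a proper basic ideal of $A$ with respect to some natural basis. I expect this to be the hardest step, and to require a careful analysis of how natural bases of $A_B^*$ relate to the structure of $A$, likely using the interplay between the adjoint construction and Lemma~\ref{order}.
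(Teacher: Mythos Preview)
Your plan is essentially the paper's own: translate each property into a condition on the zero/block pattern of $M_B$, then observe that transposition (plus a reordering of $B$, justified by Lemma~\ref{order}) preserves it. The paper's proof is in fact much terser than your outline: it declares (i), (ii), and (iv) ``immediate'' from the matrix description, and for (iii) and (v) it invokes the block upper triangular characterization of non-simplicity (from \cite{CSV1}) and the strictly triangular characterization of nilpotency (from \cite{EL2}), together with Lemma~\ref{order}, exactly as you propose.

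On the places where you are more cautious than the paper: for (i) you note that a direct-sum decomposition into ideals need not a priori align with the fixed basis $B$, and for (iv) you note that basic simplicity quantifies over \emph{all} natural bases, so the bijection $B'\leftrightarrow B\setminus B'$ from (ii) handles only basic ideals relative to $B$. These are reasonable worries, but you will not find their resolution in the paper: the authors simply assert that (i) and (iv) are immediate and do not discuss the passage from the fixed basis $B$ to an arbitrary natural basis. So your proposal is at least as complete as the paper's argument; the extra analysis you anticipate for (iv) is work the paper does not carry out.
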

\begin{proof}
It is straightforward if we apply the characterization of the structure matrix of the algebra in each case. Thus (i), (ii),  and (iv) are immediate. For (iii), use \cite[Corollary 4.6]{CSV1} and observe first that both $A$ and $A_B^*$ have nonsingular structure matrices. By Lemma \ref{order}, we get the same adjoint if we change the order of the elements of $B$. Thus we may reorganize $B$ and write the structure matrix of $A$ in the  following  form 
\begin{equation*}
  \left(
    \begin{array}{cc}
      W & U \\
      0 & Y \\
    \end{array}
  \right).
\end{equation*}
Once again we reorganize the basis and obtain the desired form for $A_B^*$. 
For (v)  we argue analogously and  apply    \cite[Corollary 3.6]{EL2} and Lemma \ref{order}.
\end{proof}

\begin{example}
\rm
Having $A$ a unique natural basis is not equivalent for $A^*_B$ to have a unique natural basis. For an example, just take the following structure matrix and its transpose
$$  \left(
     \begin{array}{ccc}
       1 & 1 & 2\\
       1& 1 & 4 \\
       1 & 1 & 7 \\
     \end{array}
   \right). $$ 
Then $A^*_B$ has a unique natural basis since it satisfies Condition (2LI) while $A$ has not because the first and the second column are linearly dependent, hence $A$ does not satisfies Condition (2LI). 
\end{example}

 A natural question for an evolution algebra which does not have a unique basis is, what is the best  
natural basis that  reveals more the structure of the algebra? The property of the adjoint may help in the choice of the best basis.  It seems that degeneracy of the adjoint is one of the key differences between different natural bases.  

Next we focus on degenerate elements of the adjoint algebra. 

\begin{lemma} Let $A$ be an evolution algebra with natural basis $B$. Then 
   $$\ann (A^*_B)= \spa \left(\{e_i \in B\ \vert \ i \not\in D(j) \mbox { for all j} \}\right).$$
Moreover, $A^2 \cdot \ann(A^*_B)=\lbrace 0\rbrace $, where $\cdot$ denotes the product in the algebra $A$.
\end{lemma}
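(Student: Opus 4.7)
The plan is to identify the generators of $\ann(A^*_B)$ via the structure matrix $M_B$ and then translate the resulting condition into descendant language in $A$. Since $A^*_B$ is by definition the evolution algebra on the same basis $B$ but with structure matrix $M_B^t$, the multiplication in $A^*_B$ is given by $(e_i)^2 = \sum_{j \in \Lambda} \omega_{ij}\, e_j$; that is, the $i$-th row of $M_B$ encodes the coordinates of the square of $e_i$ in $A^*_B$. Applying \cite[Proposition 2.18(i)]{CSV1} to the evolution algebra $A^*_B$ with natural basis $B$, we get that $\ann(A^*_B)$ is the $\bK$-span of those $e_i$ for which the $i$-th row of $M_B$ is identically zero.

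Next I would convert this row condition into a statement about descendants in $A$. The $i$-th row of $M_B$ vanishes if and only if $\omega_{ij} = 0$ for every $j \in \Lambda$, which, looking at columns, is exactly the assertion $i \notin \Supp(e_j^2) = D^1(j)$ for every $j$. A short induction on $k$ then shows $i \notin D^k(j)$ for every $k \geq 1$ and every $j$: indeed, if $i \in D^k(j)$ with $k \geq 2$ then $i \in D^1(\ell)$ for some $\ell \in D^{k-1}(j)$, contradicting the base case. Hence $i \notin D(j) = \bigcup_{k \geq 1} D^k(j)$ for every $j$, which proves the first equality.

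For the \emph{Moreover} part, I would just compute directly. Any $x \in A^2$ can be written $x = \sum_j \beta_j e_j^2$, and expanding $e_j^2 = \sum_k \omega_{kj} e_k$ gives $x = \sum_k \bigl(\sum_j \beta_j \omega_{kj}\bigr) e_k$. Writing $y = \sum_i \gamma_i e_i \in \ann(A^*_B)$, with $\gamma_i \neq 0$ only if row $i$ of $M_B$ is zero, and using orthogonality $e_k e_i = \delta_{ki}\, e_i^2$, one obtains $xy = \sum_k \gamma_k \bigl(\sum_j \beta_j \omega_{kj}\bigr) e_k^2$. For every $k$ with $\gamma_k \neq 0$ the inner sum is zero by the first part, so $xy = 0$.

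The only step that requires real attention is the inductive identification of ``not a first-generation descendant of any vertex'' with ``not a descendant of any vertex at any generation'', but this is immediate from the recursive definition of $D^k$. The rest is bookkeeping that exploits the symmetry between row and column encoded by the transpose defining $A^*_B$.
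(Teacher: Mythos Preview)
Your proof is correct and follows essentially the same line as the paper's: identify $\ann(A^*_B)$ with the span of those $e_i$ whose corresponding row of $M_B$ vanishes, translate this into the descendant condition, and deduce the ``Moreover'' from the fact that $A^2$ then has support disjoint from these indices. The only differences are cosmetic: you spell out the short induction from $i\notin D^1(j)$ to $i\notin D^k(j)$ (the paper leaves this implicit), and for the second part you compute $xy$ coordinate-wise, whereas the paper reorders $B$ and phrases the same computation as $A^2\subseteq\spa\{e_{r+1},\ldots,e_n\}$.
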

\begin{proof} 
We have introduced the $\cdot$ in the statement because we may consider products in two different algebras: $A$ and $A^\ast_B$. In what follows we will not use this notation and will multiply always inside $A$.
If $e_i \in \ann (A^*_B)$, then the row $i$ is equal to zero. Thus $i$ cannot be a descendant of any $j$. The converse is immediate.  

Next put $B= \lbrace e_1, \ldots, e_r, e_{r+1}, \ldots, e_n\rbrace$ and suppose that $\ann (A^*_B)= \spa \lbrace e_1, \ldots, e_r\rbrace$. Then $A^2 \subseteq \spa \lbrace e_{r+1}, \ldots, e_n\rbrace$ and therefore $A^2e_j = \lbrace 0\rbrace$ for every $1 \leq j \leq r$. This implies that $A^2 u= \lbrace 0\rbrace$, for every $u \in \ann (A^*_B)$, as desired.
\end{proof}

\begin{remark} 
\rm
Let $A$ be an evolution algebra satisfying $A^3=\lbrace 0\rbrace$. Then it follows from Lemma \ref{nilp-3} that for every natural basis $B$ of $A$, $B= \ann (A) \cup \ann(A^*_B)$. 
\end{remark}

\begin{proposition}\label{AdjTrans}
Let $A$ be an irreducible  evolution algebra. Suppose that there is a natural basis $B$ of $A$ such that $\ann (A^*_B )\neq \lbrace 0\rbrace$. Then the linear space $\ann ( A^*_B)$ is generated by algebraically transient elements of $A$. Moreover, there exists a $0$th decomposition of $A$ such that $\ann (A^*_B )$ is contained in the $0$th-transient space of $A$. 
\end{proposition}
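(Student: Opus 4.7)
The plan is to prove that every basis vector of $B$ lying in $\ann(A^*_B)$ is algebraically transient relative to $B$; once this is done, the second assertion follows immediately, since in the $0$th decomposition of $A$ associated to $B$ the transient space $E$ is by construction spanned by the algebraically transient elements of $B$.

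The preceding lemma identifies $\ann(A^*_B)$ with the span of those basis vectors $e_i \in B$ whose $i$-th row in $M_B$ is zero, i.e.\ those with $\omega_{ik}=0$ for every $k \in \Lambda$. Fix such an $e_i$. I would first rule out the possibility $e_i^2=0$: then $e_i \in \ann(A)$, so $\bK e_i$ is an ideal, and simultaneously the vanishing of the $i$-th row of $M_B$ implies $e_j^2 \in \spa(B \setminus \{e_i\})$ for every $j$, making $V := \spa(B \setminus \{e_i\})$ an ideal as well. The resulting decomposition $A = \bK e_i \oplus V$ contradicts the irreducibility of $A$ as soon as $\dim A \geq 2$; the one-dimensional case is trivial.

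Assuming therefore $e_i^2 \neq 0$, the heart of the argument is as follows. Suppose toward a contradiction that $e_i$ is algebraically persistent relative to $B$: then the evolution subalgebra $E_i$ generated by $e_i$ is basic simple and has a natural basis $B_i \subseteq B$ containing $e_i$. Since $\omega_{ii}=0$, the expansion $e_i^2 = \sum_k \omega_{ki}e_k$ has no $e_i$-component, and $e_i^2 \in E_i = \spa(B_i)$ forces $\Supp_B(e_i^2) \subseteq B_i \setminus \{e_i\}$, so $|B_i| \geq 2$. I would then show that $I := \spa(B_i \setminus \{e_i\})$ is a proper nonzero basic ideal of $E_i$ relative to $B_i$: because the entire $i$-th row of $M_B$ vanishes, no square $e_j^2$ (with $e_j \in B_i$) can contribute an $e_i$-component, so $e_j^2 \in I$; the orthogonality of $B_i$ kills all cross products; and $e_ie_j=0$ for $j \neq i$ shows that $e_i$ annihilates $I$. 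This contradicts the basic simplicity of $E_i$, so $e_i$ must be transient.

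The main conceptual obstacle is to keep the two different vanishing conditions on $M_B$ (column $i$ zero, meaning $e_i \in \ann(A)$, versus row $i$ zero, meaning $e_i \in \ann(A^*_B)$) cleanly separated and to use the row condition precisely where needed to split off the basic ideal $I$ inside $E_i$. Once every $e_i \in \ann(A^*_B) \cap B$ is known to be transient, the $0$th decomposition $A = A_1 \oplus \cdots \oplus A_n \,\dot{+}\, E$ associated to $B$ immediately yields $\ann(A^*_B) \subseteq E$.
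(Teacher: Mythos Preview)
Your proof is correct and follows essentially the same route as the paper's. Both arguments reduce to showing that each $e_i \in B$ with vanishing $i$-th row in $M_B$ is algebraically transient relative to $B$; the paper does this by observing that $\text{alg}(u_i^2) \subseteq \spa\{u_{r+1},\dots,u_n\}$ is a proper subalgebra of $\text{alg}(u_i)$, while you make the same point more explicitly by exhibiting $I=\spa(B_i\setminus\{e_i\})$ as a basic ideal contradicting basic simplicity. For the second assertion the paper reconstructs the $0$th decomposition by first decomposing the subalgebra $A'$ corresponding to the block $G$, whereas you simply invoke the $0$th decomposition attached to $B$ directly; your shortcut is legitimate since $E$ is by definition the span of the transient elements of $B$. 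One small remark: your handling of the one-dimensional case (``trivial'') is a little hasty—if $\dim A=1$ and $A^2=0$ then $e_1$ is actually persistent—but the paper's opening claim $\ann(A)\cap\ann(A^*_B)=\{0\}$ has the same implicit assumption $\dim A\geq 2$, so this is a boundary issue shared with the original.
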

\begin{proof} Since $A$ is irreducible, $\ann(A) \cap \ann (A^*_B)= \lbrace 0\rbrace$. Let $B= \lbrace u_1, \ldots, u_r, u_{r+1}, \ldots, u_n\rbrace$ and suppose that $\ann (A^*_B)= \spa \lbrace u_1, \ldots, u_r\rbrace$. Then the structure matrix of $A^*_B$ has the following block form
\[  \left(
     \begin{array}{ccc}
       0 &  *\\
       0& * \\  
     \end{array}
   \right) \]
   Thus, the structure matrix of $A$ with respect to $B$ has the following block form
   \[  \left(
     \begin{array}{ccc}
       0 &  0\\
       *& G \\  
     \end{array}
   \right) \] In particular, observe that $A$ is not basic simple. Next let $i \in \lbrace1, \ldots, r\rbrace$. Then $u_i\not\in \rm {alg}(u_i^2)$, the subalgebra of $A$ generated by $u_i^2$. But $u_i^2 \neq 0$ and $\rm {alg}(u_i^2) \subseteq \spa \lbrace u_{r+1}, \ldots, u_n\rbrace$, hence  $u_i$ must be algebraically transient. Consider the evolution algebra $A'$ corresponding to the  structure matrix $G$. Then, by considering the $0$th decomposition of all the irreducible subalgebras of $A'$ having the extension property, we may write 
   \[A'= A_1^0 \oplus \ldots \oplus A_s^0  \; \dot{+} \; E'_0, \]
   where $E'_0$ is a linear space spanned by transient elements of $A'$. Then  The $0$th transient space of $A$ is $E_0= \ann (A^*_B)+E'_0$,  and
   \[A= A_1^0 \oplus \ldots \oplus A_s^0  \; \dot{+} \; E'_0 \; \dot{+}\ \ann (A^*_B)\].
\end{proof}

\par \noindent

\end{document}